\newcommand{\reff}[1]{(\ref{#1})}
\theoremstyle{plain}
\newtheorem*{theo*}{Theorem}
\newtheorem*{cor*}{Corollary}
\newtheorem*{conj*}{Conjecture}
\newtheorem{theo}{Theorem}[section]
\newtheorem{cor}[theo]{Corollary}
\newtheorem{prop}[theo]{Proposition}
\newtheorem{lem}[theo]{Lemma}
\theoremstyle{remark}
\newcommand{\cc}{{\mathcal C}}
\newcommand{\cf}{{\mathcal F}}
\newcommand{\ck}{{\mathcal K}}
\newcommand{\cn}{{\mathcal N}}
\newcommand{\cm}{{\mathcal M}}
\newcommand{\ct}{{\mathcal T}}
\newcommand{\cz}{{\mathcal Z}}
\newcommand{\bF}{\mathbb{\mathcal F}}
\newcommand{\E}{{\mathbb E}}
\newcommand{\N}{{\mathbb N}}
\renewcommand{\P}{{\mathbb P}}
\newcommand{\R}{{\mathbb R}}
\newcommand{\T}{{\mathbb T}}
\newcommand{\bm}{{\mathbf m}}
\newcommand{\bN}{{\mathbf N}}
\newcommand{\TT}{{\mathfrak T}}
\newcommand{\ind}{{\bf 1}}
\newcommand{\inv}[1]{\mathop{\frac{1}{ #1}}\nolimits}
\newcommand{\expp}[1]{\mathop {\mathrm{e}^{ #1}}}
\newcommand{\lb}{[\![}
\newcommand{\rb}{]\!]}
\begin{document}

\title{The forest associated with the record process on a L\'evy tree}

\date{\today}
\author{Romain Abraham} 

\address{
Romain Abraham,
Laboratoire MAPMO, CNRS, UMR 7349,
F\'ed\'eration Denis Poisson, FR 2964,
 Université d'Orléans,
B.P. 6759,
45067 Orléans cedex 2,
France.
}
  
\email{romain.abraham@univ-orleans.fr}

\author{Jean-François Delmas}

\address{
Jean-Fran\c cois Delmas,
Université Paris-Est, \'Ecole des Ponts, CERMICS, 6-8
av. Blaise Pascal, 
  Champs-sur-Marne, 77455 Marne La Vallée, France.}

\email{delmas@cermics.enpc.fr}

\thanks{This work is partially supported by the ``Agence Nationale de
  la Recherche'', ANR-08-BLAN-0190.}

\begin{abstract}
  We  perform  a pruning  procedure  on a  L\'evy  tree  and instead  of
  throwing away  the removed sub-tree, we  regraft it on  a given branch
  (not related to the Lévy tree).  We prove that the tree constructed by
  regrafting is distributed as  the original L\'evy tree, generalizing a
  result of Addario-Berry, Broutin and Holmgren where  only Aldous's tree  is considered. As a  consequence, we
  obtain that the ``average pruning time'' of a leaf is distributed  as the
  height of a leaf picked at random in the L\'evy tree.
\end{abstract}

\keywords{Lévy tree, continuum random tree, records, cutting down a tree}

\subjclass[2010]{60J80,60C05}

\maketitle

\section{Introduction}
\label{sec:intro}

L\'evy trees arise as the scaling limits of Galton-Watson trees in the
same way as continuous state branching processes (CSBPs) are the scaling
limits of Galton-Watson processes (see \cite{dlg:rtlpsbp}, Chapter
2). Hence, L\'evy trees can be seen as the genealogical trees of some
CSBPs, \cite{lglj:bplpep}. One can define a random variable
$\ct$ in the space of real trees (see
\cite{epw:rprtrgr,e:prt,dlg:pfalt}) that describes the
genealogy of a CSBP with branching mechanism $\psi$ of the form:
\[
\psi(\lambda)=\alpha\lambda+\beta\lambda^2
+\int_{(0,+\infty)}\left(\expp{-\lambda 
  r}-1+\lambda r\right)\pi(dr)  \quad\text{for } \lambda\geq 0, 
\]
with $\alpha\geq 0$, $\beta\ge 0$ and $\pi$ a $\sigma$-finite measure on
$(0,+\infty)$        such        that        $\int_{(0,+\infty)}(r\wedge
r^2)\pi(dr)<+\infty$.     We   assume    that   either    $\beta>0$   or
$\pi((0,1))=+\infty  $.   In   particular,  the  corresponding  CSBP  is
sub-critical  as  $\psi'(0)=\alpha\ge 0$ . In
order  to  use   the  setting  of  measured  real   trees  developed  in
\cite{adh:etiltvp},  we shall  restrict  ourselves to  compact Lévy
trees, that is with branching mechanism satisfying the Grey condition:
\[
\int^{+\infty } \frac{dv}{\psi(v)} <+\infty .
\]
This condition is equivalent to the compactness of the Lévy tree, and
to the a.s. extinction in finite time of the corresponding CSBP.

In \cite{adv:plcrt}, a pruning mechanism has been constructed
so that the Lévy tree with branching mechanism $\psi$ pruned at rate $q>0$  is a L\'evy
tree  with branching mechanism $\psi_q$ defined by:
\[
\psi_q(\lambda)=\psi(\lambda+q)-\psi(q) \quad\text{for } \lambda\geq 0.
\]
This pruning is performed by throwing marks on the tree in a
Poissonian manner and by cutting the tree according to these
marks, generalizing the fragmentation procedure of the Brownian tree introduced in \cite{ap:sac}. This pruning procedure allowed to construct a tree-valued Markov
process \cite{ad:ctvmp} (see also \cite{ap:tvmcdgwp} for an analogous
construction in a discrete setting) and to study the record process on Aldous's
continuum random tree (CRT) \cite{ad:rpcrt} which is related to the number
of cuts needed to reduce a Galton-Watson tree.

This problem of cutting down a random tree arises first in
\cite{mm:cdrt}: consider a rooted discrete tree with $n$ vertices, pick an
edge uniformly at random and remove it together with the sub-tree
attached to it and then iterate the procedure on the remaining tree
until only the root is left. The question is ``How many cuts are needed
to isolate the root by this procedure''? Asymptotics in law for this
quantity are given in \cite{mm:cdrt} when the tree is a Cayley tree
(see also \cite{b:ft,bm:ctlgwtbcrt} in this case where the problem is
generalized to the isolation of several leaves and not only the root)
and in \cite{j:rcrdrt} for conditioned (critical with
finite variance) Galton-Watson trees. A.s. convergence has also been
obtained in the latter case for a slightly different quantity in
\cite{ad:rpcrt} using a special pruning procedure that we describe
now.\\

Let  $\ct$  be  a  L\'evy  tree  with  branching  mechanism  $\psi$  and
$\bm^\ct(dx)$ its ``mass measure'' supported  by the leaves of $\ct$.  We
denote by $\P^\psi_r$ the distribution of the Lévy tree corresponding to
the CSBP with branching mechanism $\psi$ starting at $r$ and by
$\N^\psi$ the corresponding excursion measure also called canonical
measure (in particular, $\P^\psi_r$ can be seen as the distribution of
a ``forest'' of L\'evy trees given by a Poisson point measure with
intensity $r\N^\psi$). The
branching points  of the Lévy  tree are  either binary or  of infinite degree
(see \cite{dlg:pfalt}, Theorem 4.6) and to  each infinite degree branching point $x$,
one can  associate a  size $\Delta_x$ which  measures in some  sense the
number  of sub-trees  attached to  it (see  \reff{DefMas}  in Section
\ref{sec:loc.time}).   We then  consider  a measure  $\mu^\ct$ on  $\ct$
defined by:
\[
\mu^\ct(dy)=2\beta \ell^\ct(dy)+\sum_{x\in
  \mathrm{Br}_\infty(\ct)}\Delta_x\delta_x(dy),
\]
where  $\ell^\ct$ is the  length measure  on the  skeleton of  the tree,
$\mathrm{Br}_\infty(\ct)$  is the  set of  branching points  of infinite
degree and $\delta_x$  is the Dirac measure at  point $x$.  Aldous's CRT
corresponds  to   the  distribution  of  $\ct$   under  $\N^\psi$,  with
$\psi(\lambda)=\inv{2}\lambda^2$, and conditionally on $\bm^\ct(\ct)=1$.
In   this    case   $\mathrm{Br}_\infty(\ct)$   is    empty   and   thus
$\mu^\ct(dy)=\ell^\ct(dy)$.

Then  we consider, conditionally  given $\ct$,  a Poisson  point process
$M^\ct(d\theta,dy)$ 
of marks on the tree with intensity
$$\ind_{[0,+\infty)}(\theta)d\theta\, \mu^\ct(dy).$$
Parameter $y$ indicates the location of the mark whereas $\theta$
represents the time at which it appears.
For every $x\in\ct$, we set
\begin{center} {\sl $\theta(x)$ the first time $\theta$ at
which a mark appears between $x$ and the root.}
\end{center}

We consider $\Theta$ the average
of these first cutting times over the Lévy tree:
\[
\Theta=\int_{\ct}\theta(x)\bm^\ct(dx).
\]

It has  been proven  in \cite{ad:rpcrt} (Theorem  6.1 and  Corollary 5.3
with $\psi(u)=u^2/2$) in  the framework of the Aldous's  CRT, that if we
denote by  $X_n$ the number  of cuts needed  to isolate the root  in the
sub-tree   spanned   by   $n$   leaves  randomly   chosen,   then   a.s.
$\lim_{n\rightarrow+\infty  } X_n/L_n=\Theta$, with  $L_n\sim \sqrt{2n}$
the total length of the sub-tree.  Moreover, the law of $\Theta$ in that
case    is    a    Rayleigh    distribution    (i.e.     with    density
$x\expp{-x^2/2}\ind_{\{x\ge 0\}}$). The distribution of $\Theta$ is also
the law of the height of a leaf picked at random in Aldous's tree.  This
surprising  relationship  is  explained  by Addario-Berry,  Broutin  and
Holmgren  in  \cite{abbh:cdtmc}, Theorem  10.   The  authors consider  a
branch with  length $\Theta$, and when  a mark appears, the  tree is cut
and the sub-tree which does not  contain the root is removed and grafted
on  this branch  (the grafting  position is  described using  some local
time).  Then the  new tree obtained by this  grafting procedure is again
distributed as Aldous's tree.

The goal of this paper is  to generalize this result to  general L\'evy
trees. We  consider a L\'evy  tree $\ct$ under $\N^\psi$ 
and we perform the pruning procedure  described above. When a mark appears,
we  remove the  sub-tree attached  to this  mark and  keep  the sub-tree
containing the root. We denote by $\ct_q$ the resulting tree at time $q$
i.e. the  set of points  of the initial  tree $\ct$ which have  no marks
between them and the root  at time $q$: 
\[
\ct_q=\{x\in \ct; \theta(x)\geq q\}.
\]
According  to \cite{adv:plcrt} Theorem 1.1, $\ct_q$  is a  Lévy tree  with branching
mechanism $\psi_q$.   We consider $\Theta_q$ the average  of the records
shifted by $q$ over the Lévy tree $\ct_q$:
\[
\Theta_q=\int _{\ct_q}  (\theta(x)-q)\, \bm^\ct(dx).
\]
Remark that a.s. $\ct_q\subset \ct$ and hence $\Theta_q\le \Theta$; 

\begin{center}
\begin{figure}[h]
\psfrag{1}{$1$}
\psfrag{2}{$2$}
\psfrag{3}{$3$}
\psfrag{0}{$\emptyset$}
\psfrag{Theta1}{$\Theta_{\theta_1}$}
\psfrag{Theta2}{$\Theta_{\theta_2}$}
\psfrag{Theta3}{$\Theta_{\theta_3}$}
\scalebox{.8}{\includegraphics[width=10cm]{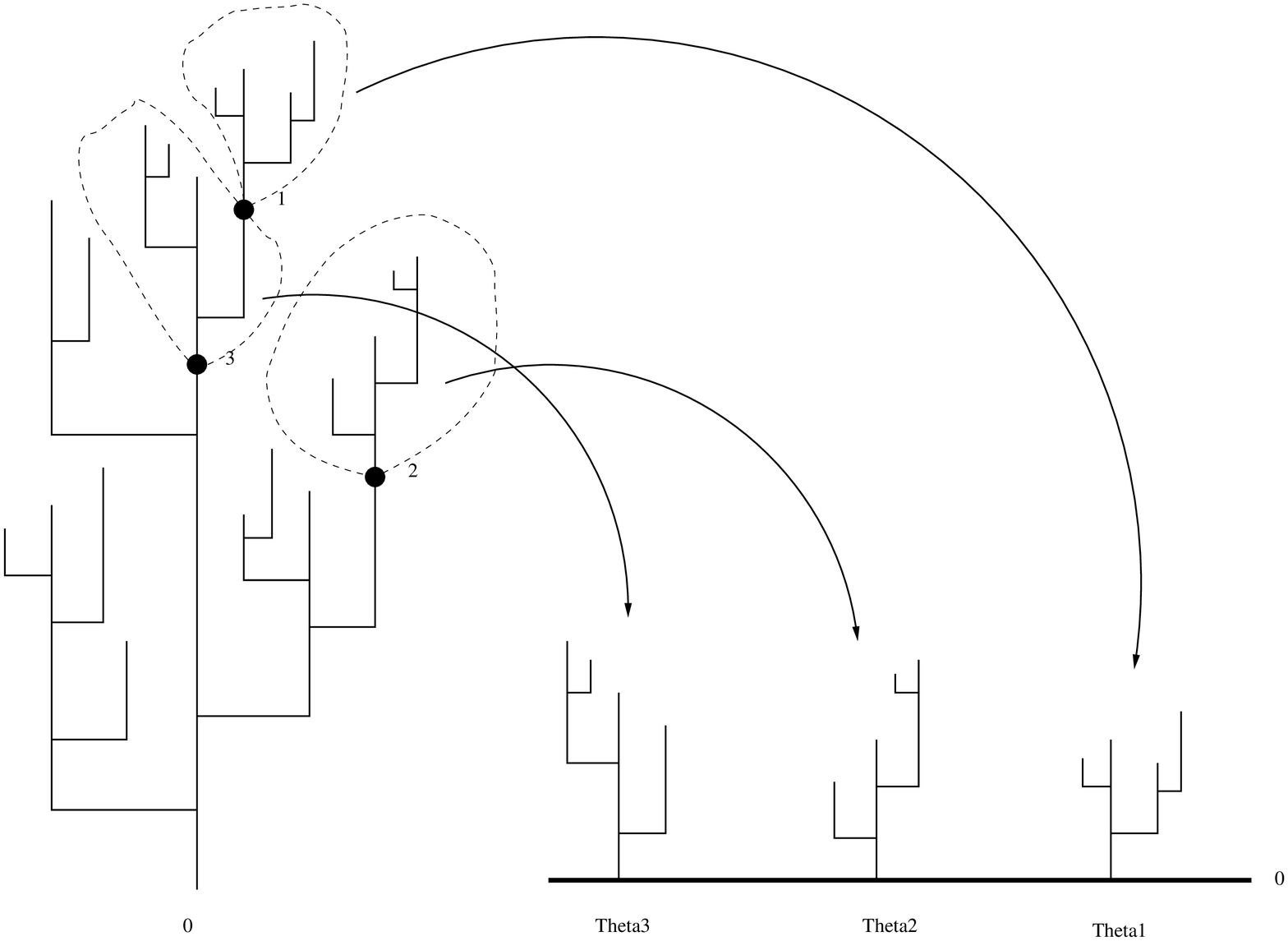}}
\caption{Pruning of  a L\'evy  tree (left) and  tree $\ct^R$  obtained by
  regrafting on a  branch (right).  The marks are  numbered according to
  their order of appearance.}
\label{fig:cutting}
\end{figure}
\end{center}

We define  an equivalence relation  on the tree  $\ct$: $x\sim y$  if the
function $\theta$ remains  constant on the path between  $x$ and $y$. We
consider  the  equivalence classes  $(\ct^i,  i\in  I^R)$  and denote
for each $i\in I^R$ by
$\theta_i$ the  common value  of the function  $\theta$. In  the pruning
procedure described above, the  tree $\ct^i$ corresponds to the sub-tree
which is removed  at time $\theta_i$ and it  is distributed according to
$\N^{\psi_{\theta_i}}$.   Then  we consider  a  branch  $B^R$ of  length
$\Theta$ rooted at some end point, say $\emptyset$. The sub-tree $\ct^i$
is grafted on  $B^R$ at distance $\Theta_{\theta_i}$ from  the root, see
Figure  \ref{fig:cutting}.  Let  $\ct^R$  denote this  tree obtained  by
regrafting.  Our main  result,  see Theorem  \ref{theo:main}, relies  on
Laplace transform computations and can be stated as follows.
\begin{theo*}
  Assume  the  Grey  condition  holds.  Under  $\N^\psi$,  $(B^R,  \ct^R)$
  is distributed  as  $(B,  \ct)$  where  $B$  is a  branch  from  the  root
  $\emptyset$ to a leaf chosen at  random on $\ct$ according to the mass
  measure $\bm^\ct$.
\end{theo*}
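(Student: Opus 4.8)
The plan is to compute the joint Laplace transform of $(B^R,\ct^R)$ and recognize it as the one of $(B,\ct)$. The natural encoding is via nonnegative functions: for a bounded measurable $f$ on $\R_+$, consider the exponential functional $\exp\big(-\int_{\ct^R}f(d(\emptyset,x))\,\bm^{\ct^R}(dx)\big)$, and, on the target side, $\E[\exp(-\int_\ct f(d(\emptyset,x))\,\bm^\ct(dx))]$ where the leaf is sampled according to $\bm^\ct$. Since the excursion measure $\N^\psi$ is infinite, one first conditions on $\bm^\ct(\ct)$ (or tilts by $1-e^{-\lambda\bm^\ct(\ct)}$) to get honest probability statements; equivalently, one works directly under $\N^\psi$ with the quantities that are $\N^\psi$-integrable. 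First I would describe the decomposition of $\ct$ along the spine $B$ to a $\bm^\ct$-random leaf: under $\N^\psi$ this spine has a length with an explicit law, and the subtrees grafted along it form a Poisson point measure on $\R_+\times\{\text{trees}\}$ with an intensity that factorizes as $d\ell\otimes(\text{a trees-measure built from }\N^\psi\text{ and }\psi)$; this is the classical spinal (or ``Bismut'') decomposition of the L\'evy tree, which I will invoke as known.

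Next I would analyze the regrafted object. By construction $\ct^R$ is obtained by taking the branch $B^R$ of length $\Theta$ and attaching each excursion class $(\ct^i,i\in I^R)$ at height $\Theta_{\theta_i}$. The key structural claim is that, under $\N^\psi$, the collection $\{(\Theta_{\theta_i},\ct^i)\}_{i\in I^R}$ is a Poisson point measure whose intensity has exactly the product form described above, and that $\Theta$ has exactly the law of the spine length. Both facts should come from the excursion theory of the pruning process: the pruning times $\theta_i$ and the removed subtrees $\ct^i$ form a Poisson point measure (this is the content of the construction in \cite{adv:plcrt,ad:ctvmp}), and the map $q\mapsto\Theta_q$ is a nonincreasing additive functional of that point measure, so that the pushforward $\theta_i\mapsto\Theta_{\theta_i}$ is a deterministic time-change that turns the intensity $d\theta$ into $d\ell$ on the regrafting branch, with the residual mass $\bm^\ct(\ct^i)$ playing the role of the jump of $q\mapsto\Theta_q$ at $\theta_i$. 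Concretely I would write $\Theta_q=\int_{\ct_q}(\theta(x)-q)\bm^\ct(dx)$, differentiate in $q$ using $\ct_q=\{\theta\ge q\}$, and identify $d\Theta_q=-\bm^\ct(\ct_q)\,dq$, which both gives the total length $\Theta=\int_0^\infty\bm^\ct(\ct_q)\,dq$ and the change of variables needed for the intensity.

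Then I would put the two computations together: the Laplace functional of $\ct^R$ is the product, over the Poisson point measure of regrafted pieces, of the single-tree Laplace transforms of $\ct^i\sim\N^{\psi_{\theta_i}}$ evaluated against $f$ shifted by $\Theta_{\theta_i}$, times the contribution of the branch $B^R$ itself; after the change of variables $\theta_i\leadsto\ell=\Theta_{\theta_i}$ and using $\psi_q(\lambda)=\psi(\lambda+q)-\psi(q)$ to re-express the branching mechanisms along the branch, this should collapse to the exponential-of-integral formula that is the spinal decomposition of $\ct$ against $f$, i.e. the Laplace functional of $(B,\ct)$. I expect the main obstacle to be the bookkeeping that identifies the intensity measure of $\{(\Theta_{\theta_i},\ct^i)\}$ on the nose — in particular verifying that the $\psi_q$-dependence introduced by the pruning, once transported through the time change $q\mapsto\Theta_q$, matches precisely the $\psi$-dependent intensity appearing along the Bismut spine; this is where the identity $\psi_q(\lambda)=\psi(\lambda+q)-\psi(q)$, the definition of $\Delta_x$ and $\mu^\ct$, and the exact normalization of the excursion measure all have to conspire, and it is the only step I do not expect to be routine.
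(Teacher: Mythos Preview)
Your overall strategy---compare Laplace functionals, using the Bismut decomposition on one side and the pruning point process on the other---is the paper's strategy. But the step you flag as ``not routine'' is in fact the whole theorem, and the mechanism you propose for it does not work. The assertion that the pruning times and removed subtrees ``form a Poisson point measure'' is incorrect: conditionally on $\ct$ the \emph{marks} $M^\ct$ are Poisson, but the point process $\cn=\sum\delta_{(x_i,\ct^i,\theta_i)}$ of removed subtrees is not. Its backward predictable compensator (Theorem~\ref{theo:Hoscheit}) is $m^{\ct_q}(dx)\,\bN^{\psi_q}[d\ct]\,dq$, which is random through $\ct_q$. Your time change $\ell=\Theta_q$ correctly absorbs the mass factor $\sigma_q$ (since $d\Theta_q=-\sigma_q\,dq$), leaving a compensator $\bN^{\psi_{q(\ell)}}[d\ct]\,d\ell$ in the new clock---but the inverse $q(\ell)$ is itself a random functional of the whole configuration, so the compensator at position $\ell$ along $B^R$ still depends on all subtrees grafted beyond $\ell$. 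This is exactly the phenomenon highlighted in the introduction: the grafted trees along $B^R$ are \emph{not} conditionally i.i.d., unlike in Bismut, and the equality holds only in law for the full ensemble. No algebraic identity of the form $\psi_q(\lambda)=\psi(\lambda+q)-\psi(q)$ will turn a random compensator into a deterministic one; what you call bookkeeping is the substance of the result.

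The paper avoids ever claiming that $\cz^R$ is Poisson. Instead it uses the compensator formula of Theorem~\ref{theo:Hoscheit} together with the Girsanov relation~\reff{eq:Girsanov} to derive, by differentiating in $q$, the functional identity
\[
\N^\psi\!\left[\sigma\bigl(\rho+g(\lambda,\Theta)\bigr)\,\expp{-\lambda\sigma-\rho\Theta-\langle\cz^R,\Phi\rangle}\right]=1
\]
(Proposition~\ref{prop:laplace-sig-the-N}). It then checks, via Theorem~\ref{theo:Bismut}, that $(H,\cz^B)$ satisfies the same identity: first when $\Phi$ does not depend on the height variable (Corollary~\ref{cor:first-eq-law}), which already gives $(\sigma,\Theta)\stackrel{d}{=}(\sigma,H)$; then for general $\Phi$ by conditioning on $(\sigma,\Theta)$, transferring to $(\sigma,H)$ via the marginal equality, and solving the resulting ODE in $h$. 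The point is that this functional equation \emph{characterizes} the joint law, so one never needs the two point processes to have matching compensators.
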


In particular, this theorem implies the following corollary.

\begin{cor*}
Under $\N^\psi[d\ct]$,  $\Theta$  is distributed  as the
height $H$ of a leaf of the L\'evy tree chosen at random according to
the mass measure $m^\ct$.
\end{cor*}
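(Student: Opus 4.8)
The plan is to obtain the corollary directly from Theorem \ref{theo:main}, with essentially no extra work. By construction the branch $B^R$ is a segment of length exactly $\Theta$, so $\Theta$ is simply the length of $B^R$; likewise the branch $B$ appearing in Theorem \ref{theo:main} joins the root $\emptyset$ to a leaf picked according to $\bm^\ct$, so the length of $B$ is by definition the height $H$ of that leaf. Introducing the map $\Phi$ which sends a pair (branch, tree) to the length of its first coordinate, this reads $\Phi(B^R,\ct^R)=\Theta$ and $\Phi(B,\ct)=H$, pointwise.

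Next I would push the identity in distribution forward through $\Phi$. Theorem \ref{theo:main} asserts that under $\N^\psi$ the pair $(B^R,\ct^R)$ has the same law as $(B,\ct)$ (an equality of $\sigma$-finite measures, since $\N^\psi$ is an excursion measure). The functional $\Phi$ is continuous, hence measurable, with respect to the topology on real trees in which the statement of Theorem \ref{theo:main} is formulated, so applying $\Phi$ to both sides yields that $\Theta$ and $H$ have the same distribution under $\N^\psi$, which is exactly the assertion of the corollary.

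There is essentially no obstacle to overcome here: the entire analytic content — the Laplace transform computations — is carried by Theorem \ref{theo:main}. The only points that deserve a word of care are bookkeeping ones: fixing once and for all the Polish space in which the rooted (and, for $\ct$, pointed) real trees $B^R,\ct^R,B,\ct$ are taken to live; checking that ``a leaf chosen at random according to $\bm^\ct$'' defines a genuine probability kernel, so that the pair $(B,\ct)$, and hence the random variable $H$, are well defined under the $\sigma$-finite measure $\N^\psi$; and noting that $\Theta$ is a.e. finite so that $B^R$ is a genuine compact branch. Once these are in place, the argument above is immediate.
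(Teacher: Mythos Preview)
Your proposal is correct and follows the paper's approach: the paper, too, presents the corollary as an immediate consequence of the main theorem, simply reading off the marginal for the branch length. One minor point of bookkeeping: the precise Theorem~\ref{theo:main} in the body of the paper is the Laplace-transform identity weighted by $\sigma$, not literally the equality in law of $(B^R,\ct^R)$ and $(B,\ct)$ that you quote from the introduction; the cleanest route is therefore to set $\Phi=0$ there (or in Corollary~\ref{cor:first-eq-law}) and use that $\sigma>0$ $\N^\psi$-a.e.\ to pass from the $\sigma$-weighted identity to the unweighted one.
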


A probabilistic interpretation of those  results for the Brownian CRT is
provided in  \cite{abbh:cdtmc} using  a path transformation  on Brownian
bridge or in  \cite{b:ft} using a fragmentation tree. We  do not know if
such an approach is valid in the present general framework.

Using the
Bismut decomposition of Lévy trees, we recover and extend to general
Lévy trees Proposition 8.2 from
\cite{ad:ctvmp} on the asymptotics of the masses of $(\ct^i, i\in
I^R)$. For $i\in I^R$, set $\sigma^i=
\bm^\ct(\ct^i)$. 
\begin{cor*}
 Assume  the  Grey  condition  holds.    $\N^\psi$-a.e., we have:
\[
\lim_{\varepsilon\to 0}\frac{1}{\N^\psi[\sigma>\varepsilon]}
\sum_{i\in I^R}\ind_{\{\sigma^i\geq 
  \varepsilon\}}=\Theta.
\]
\end{cor*}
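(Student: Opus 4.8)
\textit{Transport.} The plan is to push the statement, through Theorem \ref{theo:main}, to the ``spine side'', where the Bismut decomposition of the L\'evy tree provides a Poissonian structure, and then to conclude by a law of large numbers for Poisson point processes. The tree $\ct^R$ is the branch $B^R$, of length $\Theta$, decorated by the subtrees $\ct^i$, $i\in I^R$; hence $\bigl(\Theta,(\sigma^i)_{i\in I^R}\bigr)$ --- the length of the distinguished branch together with the multiset of masses of the subtrees hanging on it --- is a measurable functional of $(B^R,\ct^R)$. Applied instead to $(B,\ct)$, where $B$ is the branch from $\emptyset$ to a leaf chosen according to $\bm^\ct$, the \emph{same} functional returns $\bigl(H,(\bm^\ct(\ct^{(j)}))_j\bigr)$, with $H$ the length of $B$ and $(\ct^{(j)})_j$ the connected components of $\ct\setminus B$. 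By Theorem \ref{theo:main} these two random elements have the same $\sigma$-finite law under $\N^\psi$. It is therefore enough to prove, $\N^\psi$-a.e.,
\[
\lim_{\varepsilon\to 0}\ \frac{1}{\N^\psi[\sigma>\varepsilon]}\,\#\{j:\bm^\ct(\ct^{(j)})\ge\varepsilon\}=H .
\]

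\textit{Bismut decomposition.} Here I would invoke the spinal (Bismut) decomposition of the L\'evy tree: under $\N^\psi$, conditionally on the spine $B$ --- its length $H$, and, when $\pi\neq 0$, the sizes of the infinite-degree branch points lying on $B$ --- the subtrees $(\ct^{(j)})_j$ are grafted along $B$ according to a Poisson point process, a binary branch point contributing an $\N^\psi$-tree and an infinite-degree branch point of size $r$ a $\P^\psi_r$-forest, i.e. again a Poisson family of $\N^\psi$-trees. The key point --- and the crux of the proof --- is that in this decomposition the bushes along the spine have mass-intensity $\N^\psi[\sigma\in\cdot]$ per unit length of $B$; granting this, $\#\{j:\bm^\ct(\ct^{(j)})\ge\varepsilon\}$ is, conditionally on $B$, a Poisson random variable with mean $H\,\N^\psi[\sigma>\varepsilon]$. (One also needs $\N^\psi[\sigma>\varepsilon]\uparrow+\infty$ as $\varepsilon\downarrow 0$, which follows from $\N^\psi[1-\expp{-\lambda\sigma}]=\psi^{-1}(\lambda)\uparrow+\infty$ and $\N^\psi[\sigma\ge 1]<+\infty$.)

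\textit{Law of large numbers and conclusion.} Conditionally on $B$, $\varepsilon\mapsto\#\{j:\bm^\ct(\ct^{(j)})\ge\varepsilon\}$ is a non-increasing, integer-valued process whose increments, as $\varepsilon$ decreases, are independent and Poisson, with running mean $H\,\N^\psi[\sigma>\varepsilon]\to+\infty$. Picking $\varepsilon_n\downarrow 0$ along which $\N^\psi[\sigma>\varepsilon_n]$ grows geometrically with ratio $\rho>1$, Chebyshev's inequality and the Borel--Cantelli lemma give $\#\{j:\bm^\ct(\ct^{(j)})\ge\varepsilon_n\}/\bigl(H\,\N^\psi[\sigma>\varepsilon_n]\bigr)\to 1$ a.e.; the monotonicity of the two processes $\varepsilon\mapsto\#\{j:\bm^\ct(\ct^{(j)})\ge\varepsilon\}$ and $\varepsilon\mapsto\N^\psi[\sigma>\varepsilon]$ then sandwiches, for $\varepsilon\in[\varepsilon_{n+1},\varepsilon_n]$, the ratio in the display between quantities converging to $H$ up to a factor $\rho^{\pm 1}$, and letting $\rho\downarrow 1$ gives the displayed limit. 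Being a statement about the common $\sigma$-finite law of $\bigl(\Theta,(\sigma^i)_{i\in I^R}\bigr)$ and $\bigl(H,(\bm^\ct(\ct^{(j)}))_j\bigr)$, it transfers to the pruning side, which is exactly the corollary. The main obstacle is the middle step: making the Bismut decomposition explicit for a general branching mechanism and checking that the (rescaled) mass-intensity of the bushes along the spine is precisely $\N^\psi[\sigma\in\cdot]$ --- in particular dealing correctly with the contributions of the infinite-degree branch points --- so that $\N^\psi[\sigma>\varepsilon]$ is the right normalisation and the limit is $\Theta$ rather than a larger functional of the spine.
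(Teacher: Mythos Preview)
Your overall strategy --- transport to the spine side via Theorem~\ref{theo:main} and then a Poisson law of large numbers --- is exactly the paper's: it declares the corollary ``a direct consequence of Theorem~\ref{theo:main} and the properties of Poisson point measures for the Bismut decomposition (see Proposition~4.2 in~\cite{d:fhalp})''.

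The gap is in the transport. You take as your functional ``masses of the connected components of the tree minus the distinguished branch'' and assert that on $(B^R,\ct^R)$ it returns $(\sigma^i)_{i\in I^R}$. This fails whenever $\pi\neq 0$: if the mark at time $\theta_i$ lands on an infinite branching point $x$, the piece cut off is the whole bush still attached above $x$ in $\ct_{\theta_i-}$, and its root $x_i=x$ keeps infinite degree inside $\ct^i$; once regrafted on $B^R$ and the branch removed, $\ct^i\setminus\{x_i\}$ shatters into infinitely many components, so your functional returns a multiset strictly finer than $(\sigma^i)_i$. The correct transport is already the content of Theorem~\ref{theo:main}: $(\Theta,\cz^R)$ is distributed as $(H,\cz^B)$, so $(\Theta,(\sigma^i)_i)$ has the same law as $(H,(\bm^\ct(T^{B,[i]}))_{[i]})$, where the $T^{B,[i]}$ are the \emph{bushes} of Section~\ref{subsec:Bismut} (all subtrees sharing a common attachment point on $\lb\emptyset,U\rb$, glued together), not the individual connected components of $\ct\setminus B$. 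This also dissolves your ``main obstacle'': by Theorem~\ref{theo:Bismut}, conditionally on $H$, $\cz^B$ is a Poisson point measure on $[0,H]\times\T$ with intensity $du\,\bN^\psi[d\ct]$; the infinite-degree branch points on the spine are already packaged into the measure $\bN^\psi$ of~\reff{eq:def-bN}, so you never condition on their sizes and no ``larger functional of the spine'' appears. Conditionally on $\Theta$, the count $\sum_{i\in I^R}\ind_{\{\sigma^i\ge\varepsilon\}}$ is then Poisson with mean $\Theta\,\bN^\psi[\sigma\ge\varepsilon]$, and your subsequence/sandwich LLN argument goes through verbatim. Note that the Bismut intensity is $\bN^\psi$, not $\N^\psi$; the normalisation the Poisson argument actually delivers is $\bN^\psi[\sigma\ge\varepsilon]$ (finite for each $\varepsilon>0$ and tending to $+\infty$ under the Grey condition, by~\reff{eq:bN-moment}).
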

Similar      results      hold      for     the      convergence      of
$\inv{\N^\psi[\sigma\ind_{\{\sigma\leq    \varepsilon\}}]}    \sum_{i\in
  I^R}\sigma^i\ind_{\{\sigma^i\le   \varepsilon\}}$  to   $\Theta$,  see
Corollary \ref{cor:asymp-ti}. Those  results generalize Proposition 8.3 from
\cite{ad:ctvmp}.




\begin{center}
\begin{figure}[h]
\psfrag{0}{$\emptyset$}
\scalebox{.5}{\includegraphics[width=10cm]{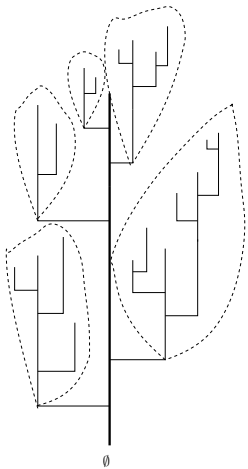}}
\caption{Bismut decomposition of a Lévy tree.}
\label{fig:bismut}
\end{figure}
\end{center}

The above theorem states  that the point process with atoms
$(\Theta_{\theta_i},\ct^i),\ i\in I^R$ is distributed as the point process that
appears in the Bismut decomposition of a L\'evy tree. This may seem
quite surprising. Indeed, if $\theta_i\le \theta_j$, then $\ct^i$ is
stochastically greater than $\ct^j$ (as a tree distributed according to
$\N^{\psi_q}$ can be obtained from a tree distributed according
$\N^{\psi_{q'}}$ for $q\ge q'$ by pruning). Consequently, the trees
that are grafted on $B^R$ are in some sense smaller and smaller
whereas the trees in the Bismut decomposition have the same
law. However, the intensity of the grafting is not uniform in the
first case (contrary to the Bismut decomposition) and depends on the
size of the trees grafted before, which gives at the end the identity
in distribution.

In the present work, we ignore  the marks that fall on the sub-trees once
they  have been  removed.  However,  we could  use them  to  iterate our
construction on each sub-trees $(\ct^i, i\in I^R)$ and so on, in order to
generalize to general L\'evy trees the result obtained for Aldous's CRT by
Bertoin and Miermont \cite{bm:ctlgwtbcrt}.\\

In  view of  the present  work, we  conjecture that  similar  results to
\cite{j:rcrdrt} hold  for infinite variance  offspring distribution. Let
$X_n$ denote the number of cuts needed to isolate the root by pruning at
edges a Galton-Watson  tree conditioned to have $n$  vertices.  We also
consider the pruning at  vertices inspired by \cite{adh:pgwttvmp}, which
is  the  discrete analogue  of  the  continuous  pruning: pick  an  edge
uniformly  at random and  remove {\bf  the vertex  from which  the edge
  comes from} together  with the sub-tree attached to  this vertex.  Let
$\tilde X_n$  be the  number of cuts  until the  root is removed  by this
procedure for  a Galton-Watson tree  conditioned to have  $n$ vertices.
According to  \cite{j:rcrdrt}, the number  of cuts needed to  remove the
root for the  pruning at vertices (that is $\tilde X_n$) or  to isolate the root
for  the pruning  at  edges (that is $X_n$)  are  asymptotically equivalent  for
finite variance offspring distribution.   However, we expect a different
behavior   in  the   infinite  variance   case.   Consider   a  critical
Galton-Watson  tree  with  offspring   distribution  in  the  domain  of
attraction of  a stable  law of index  $\gamma\in (1,2]$.   According to
\cite{d:ltcpcgwt}  or  \cite{k:spdtcpcgwt}, the  (contour  process  of  the)
Galton-Watson  tree  conditioned to  have  total  progeny $n$,  properly
rescaled, converges in  distribution to (the contour process of)
a   Lévy  tree   under  $   \N^{\psi}\left[\,\cdot\,|\sigma=1\right]$,  with
$\psi(\lambda) =  c_0\lambda^\gamma$ for some $c_0>0$.

\begin{conj*}
Let $L_n$  denote the length  of the
rescaled Galton-Watson tree conditioned  to have total progeny $n$. We
conjecture that: 
\[
\frac{\tilde X_n}{L_n}
\xrightarrow[n\rightarrow+\infty ] {(d) }Z,
\]
for some random variable $Z$ distributed  as the height of a leaf
chosen    at   random    according   to    the   mass    measure   under
$\N^{\psi}[\,\cdot\,|\sigma=1]$.
\end{conj*}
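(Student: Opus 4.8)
The plan is to reduce the conjecture to the continuum statements established above, via a record representation of $\tilde X_n$ at the discrete level, a scaling limit for the pruned tree, and a limit interchange at the critical scale.

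\emph{Step 1: a record representation for $\tilde X_n$.} I would run the pruning at vertices with the help of independent Poisson clocks: attach to each edge $e$ of the conditioned Galton--Watson tree $T_n$ an independent rate-one Poisson clock and, for a non-root vertex $w$, let $\beta_n(w)$ be the first ringing time among the clocks of the $\deg^+(w)$ edges issued from $w$ (with $\beta_n(w)=+\infty$ if $w$ is a leaf). The variables $\beta_n(w)$ are then independent, $\beta_n(w)$ being exponential with parameter $\deg^+(w)$, and the law of the sequence of cuts does not depend on the common rate. Since removing a vertex also removes the whole subtree above it, the vertex minimizing $\beta_n$ along the ancestral line of any $w$ is removed by the cut it triggers, so a cut is effectively performed at $w$ if and only if $\beta_n(w)<\beta_n(u)$ for every strict ancestor $u$ of $w$; moreover the last such cut is always the one at the root, which removes the whole tree. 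Hence, almost surely,
\[
\tilde X_n\;=\;\#\{\,w\in T_n:\ \beta_n(w)<\beta_n(u)\ \text{for every strict ancestor }u\text{ of }w\,\}\;=\;\#\{\text{removed subtrees}\}.
\]
This is the pruning-at-vertices counterpart of the record identity underlying \cite{j:rcrdrt}: the only change is that the exponential parameter is the out-degree $\deg^+(w)$ rather than $1$, which is exactly what makes the vertices of large out-degree --- the discrete analogue of $\mathrm{Br}_\infty(\ct)$ --- contribute when $\gamma<2$.

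\emph{Step 2: passage to the continuum.} Rescaling masses by $1/n$ and lengths by $n^{-(1-1/\gamma)}$, one has $T_n\to\ct$ in distribution under $\N^\psi[\,\cdot\mid\sigma=1]$ with $\psi(\lambda)=c_0\lambda^\gamma$ (\cite{d:ltcpcgwt,k:spdtcpcgwt}), and the Grey condition holds precisely because $\gamma>1$. Using the compatibility of the discrete pruning at vertices with this limit (\cite{adh:pgwttvmp}) together with the invariance in law of the continuous pruning (\cite{adv:plcrt}, Theorem~1.1), I would upgrade this to the joint convergence of $T_n$ together with its clocks, time being further rescaled by $n^{1/\gamma}$, towards $(\ct,M^\ct)$: the point is that the out-degree-weighted discrete mark intensity converges to $\mu^\ct(dy)=2\beta\,\ell^\ct(dy)+\sum_{x\in\mathrm{Br}_\infty(\ct)}\Delta_x\,\delta_x(dy)$, with $\beta=0$ when $\gamma<2$ (only the infinite-degree branch points carry marks in the limit) and with a Gaussian term $2\beta\,\ell^\ct$ arising from the central limit theorem for the out-degrees along a branch when $\gamma=2$. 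Along this convergence the rescaled removed subtrees $(\ct^i_n,\sigma^i_n/n)$ and their regrafting times converge to the families $(\ct^i,\sigma^i)$ and $(\Theta_{\theta_i})_{i\in I^R}$ of the present paper, so that $\tilde X_n=\#\{\text{removed subtrees}\}$ becomes, after rescaling, a count of removed subtrees with vanishing cutoff $\varepsilon=1/n$.

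\emph{Step 3: identification of the limit, and the main obstacle.} It then remains to relate this critical-cutoff count to the normalization $L_n$. I would invoke the last Corollary above, namely $\N^\psi$-a.e.\ $\N^\psi[\sigma>\varepsilon]^{-1}\sum_{i\in I^R}\ind_{\{\sigma^i\ge\varepsilon\}}\to\Theta$ as $\varepsilon\to0$, together with the stable tail $\N^\psi[\sigma>\varepsilon]\sim c\,\varepsilon^{-1/\gamma}$; since the chosen rescaling gives $L_n\sim c'\,n^{1/\gamma}$, of the same order as $\N^\psi[\sigma>1/n]$, this would give $\tilde X_n/L_n\to\Theta$ in distribution, up to a multiplicative constant to be absorbed in the calibration of $L_n$, and then the second Corollary above, $\Theta\stackrel{(d)}{=}H$ under $\N^\psi[\,\cdot\mid\sigma=1]$, would identify the limit $Z$ with the height of a mass-distributed leaf. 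The hard part --- and the reason we only state a conjecture --- is the exchange of the limits $n\to\infty$ and $\varepsilon\to0$ at the \emph{critical} scale $\varepsilon\asymp1/n$, where \emph{all} the removed subtrees are counted, including the arbitrarily small ones near the leaves of $T_n$: the a.s.\ statement of the Corollary carries no rate, so one needs a quantitative bound (typically a second-moment estimate) showing that $\varepsilon^{1/\gamma}\sum_{i\in I^R}\ind_{\{\varepsilon\le\sigma^i\le K\varepsilon\}}$ is small in probability uniformly for $\varepsilon$ of order $1/n$ as $K\to\infty$, together with a matching discrete estimate; the functional $\sum_i\ind_{\{\sigma^i\ge\varepsilon\}}$ being discontinuous for the topology used in Step~2, such a truncation cannot be bypassed. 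A more hands-on alternative would analyze the record identity of Step~1 directly through exchangeability and the convergence of the rescaled profile of $T_n$, along the lines of \cite{j:rcrdrt}, but the large out-degrees still have to be controlled and we do not see how to avoid a similar estimate.
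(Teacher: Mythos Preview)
The statement you are addressing is explicitly labeled a \emph{Conjecture} in the paper and is not proved there; the authors offer it as an open problem motivated by their continuum results (the identity in law $\Theta\stackrel{(d)}{=}H$ and the asymptotics of Corollary~\ref{cor:asymp-ti}), but give no argument beyond that motivation. There is therefore no ``paper's own proof'' to compare your proposal against.

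Your outline is a reasonable heuristic for why the conjecture should hold, and you correctly isolate the genuine obstruction: the interchange of the limits $n\to\infty$ and $\varepsilon\to 0$ at the critical scale $\varepsilon\asymp 1/n$, where the functional $\sum_i\ind_{\{\sigma^i\ge\varepsilon\}}$ is discontinuous and Corollary~\ref{cor:asymp-ti} gives no rate. That is precisely the gap that keeps this a conjecture rather than a theorem, and nothing in the paper closes it. A couple of further points where your sketch would need work even before that step: the joint convergence of the discrete tree together with its out-degree-weighted clocks toward $(\ct,M^\ct)$ is asserted but not available in the cited references (\cite{adh:pgwttvmp} treats the discrete pruning but not its scaling limit to the continuous one in this topology), and the claimed asymptotic $L_n\sim c'\,n^{1/\gamma}$ for the total length of the rescaled tree is itself nontrivial in the infinite-variance case. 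So your proposal should be read as a programme, not a proof --- which, to be fair, is exactly how you present it.
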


Set   $a=(\gamma-1)/\gamma$.  Using   Laplace  transform   (see  Theorem
\ref{theo:Bismut}), we get that the height $H$ of a leaf randomly chosen
in the L\'evy tree is  distributed under $\N^\psi$ as $\sigma^a Z$, with
$Z$   and  $\sigma$  independent   and  the   distribution  of   $Z$  is
characterized for $n\in \N$ by:
\[
\E\left[Z^n\right]= \inv{c_0^{n/\gamma} \gamma^n}
\frac{\Gamma(a)\Gamma(n+1)}{\Gamma(a(n+1))}\cdot
\]
In the particular  case of the Aldous CRT,  $\gamma=2$ and $c_0=1/2$, we
recover, using the  duplication formula of the gamma  function, that $Z$
(and thus $H$ under $\N^{\psi}[\,\cdot\,|\sigma=1]$) has Rayleigh
distribution.

The paper is organized as follows. We collect results on Lévy trees
in Section \ref{sec:levy}, with the Bismut decomposition is Section
\ref{subsec:Bismut} and the pruning procedure in Section \ref{sec:pruning}. The
main result is then precisely stated in Section \ref{sec:result} and
proved in Section \ref{sec:proof}.

\section{L\'evy trees and the forest obtained by pruning}
\label{sec:levy}
\subsection{Notations}

Let $(E,d)$ be a metric Polish space. For $x\in E$, $\delta_x$ denotes
the Dirac measure at point $x$. For $\mu$ a Borel measure on $E$ and $f$
a non-negative measurable function, we set $\langle \mu,f \rangle =\int
f(x) \, \mu(dx)= \mu(f)$. 

\subsection{Real trees}

We refer to \cite{c:ilt, dmt:tt, t:rtsds} for a general presentation
of $\R$-trees and to \cite{e:prt} or  \cite{lg:rta} for their
applications in the field of
random  real trees.  Informally,  real trees  are metric  spaces without
loops, locally  isometric to  the real line.   More precisely,  a metric
space $(T,d)$ is a real tree if the following properties are satisfied:
\begin{enumerate}
	\item For every $s,t\in T$, there is a unique isometric map $f_{s,t}$
from $[0,d(s,t)]$ to $T$ such that $f_{s,t}(0)=s$ and $f_{s,t}(d(s,t))=t$. 
	\item For every $s,t\in T$, if $q$ is a continuous injective map from
$[0,1]$ to $T$ such that $q(0)=s$ and $q(1)=t$, then
$q([0,1])=f_{s,t}([0,d(s,t)])$. 
\end{enumerate}
If $s,t\in T$, we will denote by $\llbracket s,t \rrbracket$ the range of the
isometric map  $f_{s,t}$ described above. We will  also write $\llbracket
s,t \llbracket$ for the  set $\llbracket s,t \rrbracket \setminus\{t\}$.

We say that $(T,d,\emptyset)$ is a rooted real tree with root
$\emptyset$ if $(T,d)$ is a real tree and $\emptyset\in T$ is a
distinguished vertex. 

Let $(T,d,\emptyset)$ be a rooted real tree.  If $x\in T$, the degree of
$x$, $n(x)$, is the number of connected components of $T\setminus\{x\}$.
We shall consider the set of leaves $ \mathrm{Lf}(T)=\{x\in T\backslash
\{\emptyset\},\,    n(x)=1\}$,    the    set   of    branching    points
$\mathrm{Br}(T)=\{x\in  T,  \, n(x)\ge  3\}$  and  the  set of  infinite
branching points is $\mathrm{Br}_\infty(T) =  \{ x\in T,\, n(x) = \infty
\} $.  The skeleton of $T$ is  the set of points in the tree that aren't
leaves: $\mathrm{Sk}(T)=T\backslash  \mathrm{Lf}(T)$.  The trace  of the
Borel $\sigma$-field of $T$  restricted to $\mathrm{Sk}(T)$ is generated
by    the    sets    $\llbracket    s,s'    \rrbracket$;    $s,s'    \in
\mathrm{Sk}(T)$.  Hence, one  defines uniquely  a  $\sigma$-finite Borel
measure $\ell^{T}$ on $T$, called length measure of $T$, such that:
\[ 
\ell^{T}(\mathrm{Lf}(T)) = 0
\quad\text{and}\quad
 \ell^{T}(\llbracket s,s'
\rrbracket)=d(s,s'). 
\]

For every $x\in T$, $\lb\emptyset ,x\rb$ is
interpreted as the ancestral line of vertex $x$ in the tree. We define
a partial order on $T$ by setting $x\preccurlyeq y$ ($x$ is an
ancestor of $y$) if $x\in\lb \emptyset,y\rb$.
If $x,y\in T$, there exists a unique $z\in T$, called the Most Recent
Common Ancestor (MRCA) of $x$ and $y$,  such that $\lb
\emptyset,x\rb\cap\lb\emptyset,y\rb=\lb\emptyset,z\rb$. We write
$z=x\wedge y$. 

\subsection{Measured rooted real trees}
We call an w-tree a weighted rooted real tree, i.e. a quadruplet $(T, d,
\emptyset, \bm)$  where $(T,d, \emptyset)$  is a locally  compact rooted
real tree and $ \bm$ is a locally finite measure on $T$.  Sometimes, we
will write $(T,  d^T, \emptyset^T, \bm^T)$ for $(T,  d, \emptyset, \bm)$
to stress the dependence in $T$,  or simply $T$ when there is no
confusion. We denote by $\TT$ the set of w-trees.

In order to define a tractable distance on w-trees, we need an
equivalence relation between two w-trees, i.e. we identify two w-trees
$(T,  d^T, \emptyset^T, \bm^T)$ and $(T',  d^{T'}, \emptyset^{T'},
\bm^{T'})$ if there exists an isometric function which maps $T$ onto
$T'$, which sends $\emptyset^T$ onto $\emptyset^{T'}$ and which
transports measure $m^T$ on measure $m^{T'}$.
We   will  denote   by   $\T$  the   set   of  measure-preserving   and
root-preserving isometry classes of w-trees.
One can define a topology on  $\T$ such that $\T$
is a Polish space, see for example \cite{ew:sprrrtmp,
  gpw:cdrmmslcmt,adh:ghptwms}.

Let $T,T'\in\TT$ be  w-trees that belong to the  same equivalence class.
Let $\varphi$ be a measure-preserving root-preserving isometry that maps
$T$ onto $T'$.  A w-tree-valued function $F$ of  the form $F(T,(x_i,i\in
I))$ where  $(x_i,i\in I)$ is a  family of points  of $T$ is said  to be
$\T$-compatible  if $F(T',(\varphi(x_i),i\in  I))$ belongs  to  the same
equivalence class as $F(T,(x_i,i\in I))$.

Let    $T\in   \TT$.  For $x\in T$, we set $h(x)=d(\emptyset, x)$ the
height of $x$ and   
\begin{equation}\label{eq:def-hmax}
H_{\text{max}}(T)=\sup_{x\in
  T} h(x)
\end{equation}
the height  of  the tree  (possibly infinite). Remark that for two
w-trees in the same equivalence class, the heights are the same,
hence $H_{\text{max}}(T)$ is well-defined for $T\in\T$.

For
$a\geq 0$, we set:
\[
T(a)=\{x\in T, \, d(\emptyset,x)=a\}
\quad\text{and}\quad
\pi_a(T)=\{x\in T, \, d(\emptyset,x)\leq a\},
\]
the restriction of the tree $T$  at level $a$ and the truncated tree $T$
up to level  $a$. We consider $\pi_a(T)$ with  the induced distance, the
root  $\emptyset$ and  the mass  measure $\bm^{\pi_a(T)}$  which  is the
restriction  of $\bm^T$  to $\pi_a(T)$,  to get  an w-tree. Let us
remark that the map $\pi_a$ is $\T$-compatible.  We denote by  $(T^{i,\circ},i\in I)$ the connected components of
$T\setminus  \pi_a(T)$.   Let  $\emptyset_i$  be  the MRCA  of  all  the
points   of    $T^{i,\circ}$.    We    consider    the   real    tree
$T^i=T^{i,\circ}\cup\{\emptyset _i\}$ rooted at point $\emptyset_i$ with
mass  measure  $\bm^{T^i}$ defined  as  the  restriction  of $\bm^T$  to
$T^i$. We will consider the point measure on $T\times \T$:
\[
\cn_a^T=\sum_{i\in I}\delta_{(\emptyset_i,T^i)}.
\]

\subsection{Grafting procedure}

We  will define  in this  section a  procedure by  which we  add (graft)
w-trees   on  an  existing w-tree. More precisely, let $T \in  \TT$ and let $((T_i,x_i),i\in I)$ be a
finite or  countable family of elements  of $\T\times T$.  We define the
real tree obtained by grafting the trees $T_i$ on $T$ at point $x_i$. We
set    $\tilde{T}    =    T    \sqcup    \left(    \bigsqcup_{i\in    I}
  T_i\backslash\{\emptyset^{T_i}\} \right)  $ where the  symbol $\sqcup$
means  that   we  choose   for  the  sets    $(T_i)_{i\in  I}$
representatives of  isometry classes in $\T$ which  are disjoint subsets
of some common  set and that we perform the disjoint  union of all these
sets. We  set $\emptyset^{\tilde T}=\emptyset^T$. The set  $\tilde T$ is
endowed with the following metric $d^{\tilde T}$: if $s,t\in \tilde T$,
\begin{equation*} 
d^{\tilde T} (s,t) = 
\begin{cases} 
d^T(s,t)\ & \text{if}\ s,t\in T, \\
d^T(s,x_i)+d^{T_i}(\emptyset^{T_i},t)\ & \text{if}\
s\in T,\ t\in T_i\backslash\{\emptyset^{T_i}\} , \\ 
d^{T_i}(s,t)\ & \text{if}\ s,t\in T_i\backslash\{\emptyset^{T_i}\} ,\\
d^T(x_i,x_j)+d^{T_j}(\emptyset^{T_j},s)+d^{T_i}
(\emptyset^{T_i},t)\ 
& \text{if}\ i\neq j \ \text{and}\ s\in T_j\backslash\{\emptyset^{T_j}\} ,\ t\in
T_i\backslash\{\emptyset^{T_i}\} .
\end{cases} 
\end{equation*}
We define the mass measure on $\tilde T$ by:
\[
\mathbf{m}^{\tilde T}=\mathbf{m}^T+\sum_{i\in I}\left(\ind_{
 T_i\backslash\{\emptyset^{T_i}\}} \mathbf{m}^{T_i}+
\mathbf{m}^{T_i}(\{\emptyset^{T_i}\}) \delta_{x_i}\right),
\]
where $\delta_x$ is the Dirac mass at point $x$.  
 We will use the following notation:
\begin{equation} 
(\tilde T,d^{\tilde T},\emptyset^{\tilde T},
\mathbf{m}^{\tilde T} ) = T \circledast_{i\in I}(T_i,x_i) .
\label{def:gref}
\end{equation}
It is clear that the metric
space $(\tilde{T},d^{\tilde T},\emptyset^{\tilde T})$ is still a rooted complete
real tree.
Notice that it is not always true that $\tilde{T}$ remains locally
compact  or that
$\mathbf{m}^{\tilde T}$ defines a
locally finite measure on $\tilde T$. For instance if we consider the
grafting $\{\emptyset\}\circledast_{n\in\N}(T,\emptyset)$ where $T$ is
a non-trivial tree (i.e. we graft the same tree an infinite number of
times on a single point), then the resulting tree is not locally
compact.

It is easy to check that this grafting procedure is $\T$-compatible.

\subsection{Excursion measure of L\'evy tree}
\label{sec:loc.time}

Let $\psi$ be a critical or sub-critical branching mechanism defined by:
\begin{equation}
   \label{eq:psi}
\psi(\lambda)=\alpha\lambda+\beta\lambda^2
+\int_{(0,+\infty)}\left(\expp{-\lambda 
  r}-1+\lambda r\right)\pi(dr)
\end{equation}
with $\alpha\geq 0$, $\beta\ge 0$ and $\pi$ is a $\sigma$-finite measure
on      $(0,+\infty)$     such      that     $\int_{(0,+\infty)}(r\wedge
r^2)\pi(dr)<+\infty$ and $\langle \pi,1\rangle=+\infty$ if $\beta=0$. We also assume the Grey condition:
\begin{equation}
\int^{+\infty}\frac{d\lambda}{\psi(\lambda)}<+\infty.
\end{equation}
The  Grey  condition  is  equivalent  to  the  a.s.  finiteness  of  the
extinction time of the CSBP. This assumption is used to ensure that
the corresponding Lévy tree is compact. Let $v$ be the unique
non-negative solution of the equation:
\[
\forall a>0,\qquad \int_{v(a)}^{+\infty}\frac{d\lambda}{\psi(\lambda)}=a.
\]

We gather here results from \cite{dlg:pfalt}, Theorem 4.2, Theorem
4.3, Theorem 4.6, Theorem 4.7. Remarks of pages
575 and 578 of \cite{dlg:pfalt} state that the local time $\ell^a$ is
a function of the tree (see the third property below) and hence can be
defined on $\T$.

Using the coding of compact real trees by height functions, we can define  a    $\sigma$-finite   measure
$\N^\psi[d\ct]$ on $\T$, or  excursion measure  of Lévy tree,    with the
following properties.
\begin{enumerate}
   \item[(i)] \textbf{Height.} Recall Definition \reff{eq:def-hmax} of
     the height $H_{\text{max}}(\ct)$ of a tree. For all $a>0$,
     $$\N^\psi[H_{\text{max}}(\ct)>a]=v(a).$$

\item[(ii)] \textbf{Mass measure.} The mass measure $\bm^\ct$ is supported by
$\mathrm{Lf}(\ct)$, $\N^\psi[d\ct]$-a.e.

   \item[(iii)] \textbf{Local time.}
There exists a process $(\ell^a, a\geq 0)$ with values on finite
measures on $\ct$, which is
càdlàg for the weak topology on finite measures on $\ct$ and such that
$\N^\psi[d\ct]$-a.e.:
\begin{equation} 
\label{eq:int-la}
\mathbf{m}^{\ct}(dx) = \int_0^\infty \ell^a(dx) \, da,
\end{equation} 
$\ell^0=0$, $\inf\{a > 0 ; \ell^a = 0\}=\sup\{a \geq 0 ; \ell^a\neq
0\}=H_{\text{max}}(\ct)$ and for every fixed $a\ge 0$,
$\N^\psi[d\ct]$-a.e.: 
 \begin{itemize}
 \item The measure $\ell^a$ is supported on
$\ct(a)$. 
 \item We have for every bounded
continuous function $\phi$ on $\ct$:
\begin{align*} 
\langle\ell^a,\phi \rangle 
& = \lim_{\epsilon \downarrow 0}
\frac{1}{v(\epsilon)} \int \phi(x) \ind_{\{H_{\text{max}}(\ct')\ge
\epsilon\}} \cn_a^{\ct}(dx, d\ct') \\
 & = \lim_{\epsilon \downarrow 0} \frac{1}{v(\epsilon)} \int \phi(x)
\ind_{\{H_{\text{max}}(\ct')\ge \epsilon\}}
\cn_{a-\epsilon}^{\ct}(dx, d\ct'),\ \text{if}\ a>0.
\end{align*}
 \end{itemize}
 Under $\N^\psi$,  the real  valued process $(\langle\ell^a,1  \rangle ,
 a\geq  0)$ is  distributed as  a CSBP  with branching  mechanism $\psi$
 under its canonical measure.
\item[(iv)]  \textbf{Branching property.}
For every $a>0$, the conditional
distribution of the point measure $\cn_a^{\ct}(dx,d\ct')$ under
$\N^\psi[d\ct|H_{\text{max}}(\ct)>a]$, given $\pi_a(\ct)$, is that of a Poisson
point measure on $\ct(a)\times \T$ with intensity
$\ell^a(dx)\N^\psi[d\ct']$.
\item[(v)] \textbf{Branching points.}
\begin{itemize}
\item $\N^\psi[d\ct]$-a.e., the branching points of $\ct$ are of degree 3
  or $+\infty$.
\item The set of binary branching points (i.e. of degree 3) is empty
  $\N^\psi$ a.e if $\beta=0$ and is a countable dense subset of $\ct$
  if $\beta>0$.
\item The set $\mathrm{Br}_\infty(\ct)$  of infinite branching points is
  nonempty with $\N^\psi$-positive measure if and only if $\pi\ne 0$. If
  $\langle  \pi,1\rangle=+\infty$, the set  $\mathrm{Br}_\infty(\ct)$ is
  $\N^\psi$-a.e. a countable dense subset of $\ct$.
\end{itemize}
\item[(vi)] \textbf{Mass of the nodes.}
The set $\{ d(\emptyset,x),\ x\in \mathrm{Br}_\infty(\ct) \}$ coincides
$\N^\psi$-a.e. with the set of discontinuity times of the mapping $a\mapsto
\ell^a$. Moreover, $\N^\psi$-a.e., for every such discontinuity time $b$, there
is a unique $x_b\in \mathrm{Br}_\infty(\ct)\cap\ct(b)$ and
$\Delta_b>0$, such that:
\[ 
\ell^b = \ell^{b-} + \Delta_b \delta_{x_b}, 
\]
where $\Delta_b>0$ is called the mass of the node $x_b$. Furthermore
$\Delta_b$ can  be obtained
by the approximation: 
\begin{equation} 
\Delta_b = \lim_{\epsilon \rightarrow 0}
\frac{1}{v(\epsilon)}
n(x_b,\epsilon), 
\label{DefMas} 
\end{equation}
where $n(x_b,\epsilon)=\int \ind_{\{x=x_b\}}\ind_{\{H_{max}(\ct') >
\epsilon\}} \cn_b^\ct(dx,d\ct')$ is the number of sub-trees originating
from $x_b$ with height larger than $\epsilon$. 
\end{enumerate}

In order to stress the dependence in $\ct$, we may write $\ell^{a, \ct}$
for $\ell^a$. 

We set $\sigma^\ct$  or simply $\sigma$ when there  is no confusion, the
total mass of the mass measure on $\ct$:
\begin{equation} 
\label{eq:s=la}
\sigma=\bm^{\ct}(\ct).
\end{equation}
In particular, as $\sigma$ is distributed as the total mass of a CSBP
under its canonical measure, we have that $\N^\psi$-a.s. $\sigma>0$
and for $q> 0$ (see for instance \cite{k:ilflpa}, Corollary 10.9 for
the first equality, the others being obtained by differentiation):
\begin{equation}
   \label{eq:N1-es}
\N^\psi\left[1-\expp{-\psi(q) \sigma} \right]=q,\quad
 \N^\psi\left[\sigma\expp{-\psi(q)\sigma}\right]=\frac{1}{\psi'(q)} 
\quad \text{and}\quad 
\N^\psi\left[\sigma^2\expp{-\psi(q)\sigma}\right]
=\frac{\psi''(q)}{\psi'(q)^3}.   
\end{equation}
The last two equations hold for $q=0$ if $\psi'(0)>0$.
\subsection{Other measures on $\T$}

For each $r>0$, we  define  a probability  measure  $\P_r^\psi$ on $\T$  as  follows.  Let $r>0$  and
$\sum_{k\in\ck}\delta_{\ct^k}$ be  a Poisson point measure  on $\T$ with
intensity  $r\N^\psi$.  Consider  $\{\emptyset\}$  as the  trivial  w-tree reduced  to the  root with  null mass  measure. Define
$\ct=\{\emptyset\}   \circledast_{k\in  \ck}(   \ct^k,   \emptyset)$.  Using
Property (i) as well as \reff{eq:N1-es},  one easily gets that for
every $\varepsilon>0$ there is only a finite number of trees $\ct^k$
with height larger than $\varepsilon$. As each tree $\ct^k$ is
compact, we deduce that $\ct$ is a compact w-tree, and hence belongs
to $\T$. We denote by
$\P^\psi_r$ its distribution. Its corresponding local time is defined by
$\ell^a=\sum_{k\in \ck}  \ell^{a, \ct^k}$ and its total  mass is defined
by $\sigma=\sum_{k\in \ck}  \sigma^{\ct^k}$. Under $\P^\psi_r$, the real
valued process $(\langle\ell^a,1 \rangle , a\geq 0)$ is distributed as a
CSBP with branching mechanism $\psi$ with initial value $r$.\\

We consider the following measure on $\T$:
\begin{equation}
\label{eq:def-bN}
\bN^\psi[d\ct]=2\beta \N^\psi[d\ct]+\int_0^{+\infty}r\pi(dr)\,\P_r^\psi(d\ct)
\end{equation}
which appears as the grafting intensity in the tree-valued Markov
process of \cite{adh:etiltvp}.
From  \reff{eq:N1-es} and \reff{eq:def-bN}, elementary computations yield for $q>0$:
\begin{equation}
   \label{eq:bN-moment}
\bN^\psi\left[1-\expp{-\psi(q) \sigma}\right]=\psi'(q)
- \psi'(0), 
\end{equation}
as well as 
\begin{equation}
   \label{eq:bN-moment-1}
   \bN^\psi\left[\sigma\expp{-\psi(q) \sigma}
   \right]=\frac{\psi''(q)}{\psi'(q)} 
   \quad\text{and}\quad
   \bN^\psi\left[\sigma^2\expp{-\psi(q)
       \sigma}\right]=\inv{\psi'(q)} \partial_q
   \left(\frac{-\psi''(q)}{\psi'(q)} \right).
\end{equation}
The last two equalities also hold for $q=0$ if $\psi'(0)>0$. 

\subsection{Bismut decomposition of a L\'evy tree}
\label{subsec:Bismut}

We first present a decomposition of $T\in \TT$ according to a given
vertex $x\in T$. We denote by $(T^{j,\circ},j\in J_x)$
the connected components of $T\setminus\lb\emptyset,x\rb$. For every
$j\in J_x$, let $x_j$ be the MRCA of $T^{j,\circ}$ and consider
$T^j=T^{j,\circ}\cup  \{x_j\}$ as an element of $\T$ with mass
measure the mass measure of $T$ restricted to $T^{j,\circ}$. In order to graft
together all the sub-trees with the same MRCA, we consider the following
equivalence relation on $J_x$:
\[
j\sim j'\iff x_j=x_{j'}.
\]
Let $I_x^B$ be  the set of equivalence classes. For $[i]\in I_x^B$, we set
$x_{[i]}$ for the common value of $x_j$ with $j\in [i]$. We consider $\{x_{[i]}\}$ as
an element of $\T$ with mass measure $\bm^T(\{x_{[i]}\})\delta_{x_{[i]}}$. For
$[i]\in I_x^B$, we consider the following element of $\T$ defined by:
\[
T^{B,[i]} = \{x_{[i]}\} \circledast_{j\in [i]}(T^j,x_{[i]}) .
\]
Let  $h_{[i]}=d(\emptyset,  x_{[i]})$.  We  consider the  random  point  measure
$\cm_x^T$ on $\R_+\times\T$ defined by:
\[
   \cm_x^T=\sum_{[i]\in I_x^B}\delta_{(h_{[i]}, T^{B,[i]})}.
\]

Under  $\N^\psi$, conditionally  on  $\ct$, let  $U$  be a  $\ct$-valued
random  variable,  with   distribution  $\sigma^{-1}  \,  \bm^\ct$. In
other words, conditionally  on  $\ct$, $U$ represents a leaf chosen
``uniformly'' at random according to the mass measure $\bm^\ct$. We
define under $\N^\psi$ a  non-negative  random  variable  and
a random point measure on $\R_+\times\T$ as follows: 
\begin{equation}
   \label{eq:def-H-ZB}
H=d^\ct(\emptyset^\ct,  U)
\quad\text{and}\quad
\cz^B=\cm_U^\ct. 
\end{equation}
Let us remark that the distribution of $(H,\cz^B)$
does not depend on the choice of the representative in the equivalence
class and thus this random variable is well defined under $\N^\psi$.

By construction, for every non-negative measurable function $\Phi$ on
$\R_+\times\T$ and for every $\lambda\geq 0$, $\rho\ge 0$, we have:
\[
\N^\psi\left[\sigma \expp{-\lambda \sigma - \rho H - \langle \cz^B, \Phi
    \rangle}\right] =
\N^\psi\left[\int_\ct \bm^\ct(dx)\, \expp{-\lambda \sigma - \rho
    h(x) - \langle \cm_x^ \ct, \Phi
    \rangle}
\right]. 
\]

As a
direct consequence of Theorem 4.5 of \cite{dlg:pfalt}, we get the
following result. 

\begin{theo}\label{theo:Bismut}
For every non-negative measurable function $\Phi$ on
$\R_+\times\T$ and for every $\lambda\geq 0$, $\rho\ge 0$, we have:
\[
\N^\psi\left[\sigma \expp{-\lambda \sigma - \rho H - \langle \cz^B, \Phi
    \rangle}\right] 
=\int_0^{+\infty}da \expp{-\rho 
    a}\exp\left(-\int_0^ag(\lambda,u)du\right),
\]
where
\begin{equation}
   \label{eq:def-g}
g(\lambda,u)=\psi'(0)+\bN^\psi\left[1-\expp{-\lambda\sigma-\Phi(u,\ct)}\right].
\end{equation}
\end{theo}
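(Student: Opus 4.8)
The plan is to compute the joint Laplace transform appearing on the left-hand side using the Bismut decomposition (Theorem~\ref{theo:Bismut}) and to reduce the right-hand side to a single one-dimensional integral which, after an exchange in the order of integration, matches it. Concretely, I would take a non-negative measurable function $\Phi$ on $\R_+\times\T$ and parameters $\lambda,\rho\ge 0$, and seek to show
\[
\N^\psi\left[\sigma\, \expp{-\lambda\sigma-\rho H-\langle\cz^B,\Phi\rangle}\right]
=\int_0^{+\infty}da\,\expp{-\rho a}\exp\left(-\int_0^a g(\lambda,u)\,du\right),
\]
with $g$ as in \reff{eq:def-g}. Since by the displayed identity preceding the statement the left-hand side equals $\N^\psi\left[\int_\ct\bm^\ct(dx)\,\expp{-\lambda\sigma-\rho h(x)-\langle\cm_x^\ct,\Phi\rangle}\right]$, it suffices to analyse the spinal decomposition of $\ct$ along the ancestral line $\lb\emptyset,x\rb$ of a mass-sampled leaf $x$.

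First I would invoke Theorem~4.5 of \cite{dlg:pfalt}, the Bismut-type decomposition of the L\'evy tree under $\N^\psi$ biased by $\sigma$: the spine $\lb\emptyset,U\rb$ has a ``length'' governed by $da$ (this produces the outer integral $\int_0^{+\infty}da$), the quadratic part $\beta$ contributes sub-trees grafted along the spine according to a Poisson measure with intensity $2\beta\,du\,\N^\psi[d\ct']$, and the jump part $\pi$ contributes, at the branching points of infinite degree on the spine, independent Poissonian families of sub-trees; grafting together the sub-trees sharing a common MRCA on the spine exactly produces atoms distributed according to $\P_r^\psi$ weighted by $r\pi(dr)$. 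Combining the two sources, the point measure $\cz^B=\cm_U^\ct$ of sub-trees grafted along the spine at heights $u\in(0,a)$ is, conditionally on the spine length being $a$, a Poisson point measure on $(0,a)\times\T$ with intensity $du\,\bN^\psi[d\ct']$, where $\bN^\psi$ is the measure \reff{eq:def-bN}. Here $H=a$ and $\sigma$ is the sum of the masses of all grafted sub-trees (the spine itself carries no mass, by Property~(ii)).

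Next, conditioning on the spine length $a$ and applying the exponential formula for Poisson point measures to the intensity $du\,\bN^\psi[d\ct']$, the factor involving $\lambda\sigma$ and $\langle\cz^B,\Phi\rangle$ becomes
\[
\exp\left(-\int_0^a\bN^\psi\left[1-\expp{-\lambda\sigma-\Phi(u,\ct')}\right]du\right),
\]
and it remains to account for the density of the spine length. The biasing by $\sigma$ together with the sub-criticality term $\psi'(0)=\alpha$ should make the ``effective'' density of $a$ equal to $\expp{-\int_0^a\psi'(0)\,du}=\expp{-\alpha a}$ against $da$ (this is where the constant $\psi'(0)$ in the definition of $g$ enters); multiplying by $\expp{-\rho a}$ from the $\rho H$ term and collecting everything gives precisely $\int_0^{+\infty}da\,\expp{-\rho a}\exp(-\int_0^a g(\lambda,u)\,du)$ with $g(\lambda,u)=\psi'(0)+\bN^\psi[1-\expp{-\lambda\sigma-\Phi(u,\ct')}]$.

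The main obstacle I anticipate is bookkeeping the spine decomposition precisely enough to see that the three separate contributions along the spine — the deterministic ``length'' measure producing $da$, the Brownian grafting with intensity $2\beta\,du\,\N^\psi$, and the grafting at infinite branching points with intensity $\int_0^\infty r\pi(dr)\,\P_r^\psi$ glued over common MRCAs — combine exactly into a single Poisson measure with intensity $du\,\bN^\psi[d\ct']$ on $(0,H)\times\T$, and in particular that the heights $h_{[i]}$ of the grafted clusters are distributed as a Poisson process on $(0,H)$ with Lebesgue intensity. Extracting this cleanly from Theorem~4.5 of \cite{dlg:pfalt}, and correctly tracking the factor $\psi'(0)$ coming from the sub-critical drift $\alpha$ in the spine length density, is the delicate point; the exponential-formula computation and the final identification are then routine.
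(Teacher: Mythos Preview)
Your proposal is correct and follows exactly the route the paper indicates: the statement is presented there as ``a direct consequence of Theorem~4.5 of \cite{dlg:pfalt}'', and the informal sentence following the theorem spells out precisely the Poisson description you give (spine length with density $\expp{-\psi'(0)a}\,da$ and, conditionally on $H$, a Poisson point measure on $[0,H]$ with intensity $\bN^\psi$). Your unpacking of how the Brownian and node contributions combine into $\bN^\psi$ and how the exponential formula then yields the stated integral is the intended derivation.
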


In  other  words, under  $\N^\psi\left[\sigma,  \,  d\ct\right]$, if  we
choose  a leaf  $U$ uniformly   (i.e. according  to the normalized mass measure
$\bm^\ct$), the  height $H$ of this  leaf is distributed  according to the
density  $da\expp{-\psi'(0) a}$  and,  conditionally on  $H$, the  point
measure   $\cz^B$   is   a   Poisson   point process on $[0,H]$ with   intensity
$ \bN^\psi[d\ct]$.

\subsection{Pruning a L\'evy tree}
\label{sec:pruning}

A general pruning of a L\'evy tree has been defined in
\cite{adv:plcrt}. We use a special case of this pruning depending on a
one-dimensional parameter $\theta$ used first in \cite{v:dmfglt} to
define a fragmentation process of the tree.

More  precisely, under $\N^\psi[d\ct]$, we  consider  a mark  process
$M^\ct(d\theta,dy)$  on the  tree which  is a  Poisson point  measure on
$\R_+\times \ct$ with intensity:
\[
\ind_{[0,+\infty)}(\theta)d\theta\left(2\beta \ell^\ct(dy)+\sum_{x\in
   \mathrm{Br}_\infty(\ct)}\Delta_x\delta_x(dy)\right).
\]
The atoms $(\theta_i,y_i)_{i\in I}$ of this measure can be seen as
marks that arrive on the tree, $y_i$ being the location of the mark
and $\theta_i$ the ``time'' at which it appears. There are two kinds
of marks: some are ``uniformly'' distributed on the skeleton of the
tree (they correspond to the term $2\beta \ell^\ct$ in the intensity)
whereas the others are located on the infinite branching points of the tree, an infinite
branching point $y$ being first marked after an exponential time with
parameter $\Delta_y$.

For every $x\in\ct$, we set:
\[
\theta(x)=\inf\{\theta>0,\ M^\ct([0,\theta]\times \lb\emptyset,x\rb)>0\}.
\]
The process $(\theta(x),x\in\ct)$  is  called  the  record   process
on  the  tree  $\ct$ as  defined  in
\cite{ad:rpcrt}.  This corresponds  to the  first  time at  which a  mark
arrives on  $\lb\emptyset,x\rb$.  Using  this record process,  we define
the pruned tree at time $q$ as:
\[
\ct_q=\{x\in\ct,\ \theta(x)\ge q\}
\]
with  the  induced  metric,   root  $\emptyset$  and  mass  measure  the
restriction of the mass measure $\bm^\ct$.  If one cuts the tree $\ct$ at
time $\theta_i$  at point  $y_i$, then $\ct_q$  is the sub-tree  of $\ct$
containing the root at time $q$. Here again, the definition of $\ct_q$
is $\T$-compatible.

\begin{prop}\label{prop:law_pruned}(\cite{adv:plcrt}, Theorem 1.1)
For $q>0$ fixed, the distribution of $\ct_q$ under $\N^\psi$
is $\N^{\psi_q}$ with the branching mechanism $\psi_q$ defined for
$\lambda\geq 0$ by:
\begin{equation}
\label{eq:def-psi-theta}
\psi_q (\lambda)=\psi(\lambda+q)-\psi(q).
\end{equation}
\end{prop}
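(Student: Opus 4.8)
The plan is to lift the statement from the tree $\ct$ to the exploration process $(\rho_t,\ t\ge 0)$ coding it, in the sense of Duquesne--Le Gall, since the Poissonian marking and the pruning admit an explicit description there. Under $\N^\psi$ the exploration process is built from an excursion of the spectrally positive L\'evy process $X$ with Laplace exponent $\psi$, and the two families of marks correspond to the two ``layers'' of $X$: marks spread along the skeleton at rate $2\beta\,\ell^\ct(dy)$ come from an independent homogeneous Poisson process placed on the continuous (Brownian) part of the local-time functional of $\rho$, while a mark on an infinite branching point $x$ with mass $\Delta_x$ --- which corresponds bijectively to a jump of $X$ of size $\Delta_x$ --- is placed after an independent exponential time of parameter $\Delta_x$. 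The pruned tree $\ct_q=\{x:\theta(x)\ge q\}$ is then read off the sub-exploration-process obtained by discarding the portions of $\rho$ whose ancestral line already carries a mark by time $q$.

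The core of the argument is a special Markov property for the pruned exploration process, in the spirit of the decomposition underlying property (iv) of Section~\ref{sec:loc.time}: conditionally on $\ct_q$ (equivalently, on the pruned exploration process), the excised subtrees, indexed by the local time $\ell^{a,\ct_q}$ of $\ct_q$ at their grafting points, form a Poisson point measure whose intensity is $\ell^{a,\ct_q}(dx)$ times the law of a marked L\'evy tree that is entirely removed by time $q$; and, crucially, the pruned exploration process is again the exploration process of a L\'evy excursion. One proves this by an excursion-theoretic analysis of $\rho$: the instants at which the currently explored ancestral line first meets a mark are stopping times for the natural filtration enriched with the marks, and between two consecutive such instants the process behaves like an unmarked exploration process; this yields at once the Markov character of the pruned process and the Poissonian independence of the removed pieces. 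This is precisely Theorem~1.1 of \cite{adv:plcrt}, whose proof one follows.

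It then remains to identify the branching mechanism. The marking thins the driving L\'evy process: a jump of $X$ of size $r$ survives up to time $q$ with probability $\expp{-qr}$, so the L\'evy measure $\pi(dr)$ is replaced by $\expp{-qr}\pi(dr)$; the Brownian marks at rate $2\beta$ along the skeleton, together with the compensation needed to keep the process centred, turn the drift $\alpha$ into $\alpha+2\beta q+\int_{(0,+\infty)} r\,(1-\expp{-qr})\,\pi(dr)$, while the Gaussian coefficient $\beta$ is unchanged. Assembling these modifications gives $\psi_q(\lambda)=\bigl(\alpha+2\beta q+\int r(1-\expp{-qr})\pi(dr)\bigr)\lambda+\beta\lambda^2+\int (\expp{-\lambda r}-1+\lambda r)\expp{-qr}\pi(dr)$, and expanding $\psi(\lambda+q)-\psi(q)$ shows this equals $\psi(\lambda+q)-\psi(q)$. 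Alternatively, once $\ct_q$ is known to be a L\'evy tree, it suffices to match one functional: evaluating $\N^\psi\bigl[1-\expp{-\psi_q(\lambda)\sigma_q}\bigr]$ with $\sigma_q=\bm^\ct(\ct_q)$ via the marking and comparing with \reff{eq:N1-es} pins down the mechanism.

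The main obstacle is the special Markov property itself: one must show rigorously that the pruned exploration process is Markov with the dynamics of a L\'evy excursion and is conditionally independent of the Poissonian family of excised subtrees. The delicate part is the bookkeeping for the marks sitting on the dense countable set of infinite branching points, whose exponential clocks must be shown to mesh correctly with the regenerative structure of $\rho$; handling these jump-marks together with the Brownian marks, and verifying that $\ell^{a,\ct_q}$ is the correct index for the Poisson measure of removed subtrees, is where the real work lies.
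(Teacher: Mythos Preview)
The paper does not prove this proposition: it is stated with the attribution ``(\cite{adv:plcrt}, Theorem 1.1)'' and no proof is given, the result being imported directly from Abraham--Delmas--Voisin. Your sketch correctly describes the strategy of that reference --- lifting to the exploration process, establishing the special Markov property for the pruned process, and identifying the new mechanism via the thinning of jumps and the modified drift --- so in substance you are reproducing the approach of the cited source rather than proposing an alternative. There is nothing to compare against in the present paper.
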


Furthermore, the measure $\N^{\psi_q}$ is absolutely continuous with
respect to $\N^\psi$, see \cite{ad:ctvmp}, Lemma 6.2: for every
$q\ge 0$ and every non-negative measurable function $F$ on $\T$, we have
\begin{equation}
\label{eq:Girsanov}
\N^{\psi_q}[F(\ct)]=\N^\psi\left[F(\ct)\expp{-\psi(q)\sigma}\right].
\end{equation}
We shall refer to this equation as  the Girsanov transformation for Lévy trees
as it corresponds to the Girsanov transformation of the height process 
(which is Brownian) in the quadratic case $\pi(dr)=0$. This
transformation corresponds also to the 
Essher transformation for the underlying Lévy process used in
\cite{dlg:rtlpsbp} to define  the height process in the general case. 
We deduce  from definition \reff{eq:def-bN} of $\bN^\psi$,  that for any
measurable non-negative functionals $F$ and $q\geq 0$:
\begin{equation}
   \label{eq:Girsanov-bN}
\bN^{\psi_q}[F(\ct)]=\bN^{\psi}\left[F(\ct)\expp{-\psi(q)
  \sigma}\right].
\end{equation}

Making $q$ vary  allows us to define a  tree-valued process $(\ct_q,q\ge
0)$ which is a Markov process under $\N^\psi$, see \cite{ad:ctvmp} Lemma
5.3 stated for the family of  exploration processes  which codes for the
corresponding Lévy  trees. The
process $(\ct_q,q\ge 0)$ is  a non-increasing process (for the inclusion
of  trees),  and  is  c\`adl\`ag.   Its  one-dimensional  marginals  are
described  in Proposition  \ref{prop:law_pruned} whereas  its transition
probabilities are  given by the  so-called special Markov  property (see
\cite{adv:plcrt}  Theorem  4.2  or  \cite{ad:ctvmp} Theorem  5.6).   The
time-reversed  process is also  a Markov  process and  its infinitesimal
transitions are  described in  \cite{adh:etiltvp} using a  point process
whose definition we recall now. We set:
\[
\{\theta_i,i\in I^R\}
\]
the set of jumping times of the process $(\ct_\theta,\theta\ge
0)$. For every $i\in I^R$, we set
$ \ct^{i,\circ}=\ct_{\theta_i-}\setminus\ct_{\theta_i}$ and  denote by
$x_i$ the MRCA  of 
$ \ct^{i,\circ}$. For $i\in I^R$,  we set:
\[
 \ct^i= \ct^{i,\circ}\cup\{x_i\}
\]
which is a real tree with distance the induced distance,  root $x_i$ and
mass measure the restriction of $\bm^\ct$ to $\ct^i$. Finally, we define,
conditionally on $\ct_0$, the
following random point measure on $\ct_0\times \T\times \R_+$:
\[
\cn=\sum_{i\in I^R}\delta_{(x_i, \ct^i,\theta_i)}.
\]

\begin{theo}[\cite{adh:etiltvp}, Theorem 3.2 and
  Lemma 3.3] 
\label{theo:Hoscheit}
Under  $\N^\psi$,  the predictable  compensator  of  the backward  point
process defined on $\R_+$ by:
\[
\theta\mapsto \ind_{\{\theta\le q'\}}\cn(dx,d\ct,dq')
\]
with respect to the backward left-continuous filtration
$\bF=(\cf_\theta,\theta\ge 0)$ defined by:
\[
\cf_\theta=\sigma((x_i, \ct^i,\theta_i),\ i\in I^R, \theta_i\ge
\theta)=\sigma(\ct_{q-},\ q\ge \theta).
\]
is given by:
\[
\mu(dx,d\ct,dq)=m^{\ct_q}(dx)\bN^{\psi_q}[d\ct]\ind_{\{q\ge 0\}}dq.
\]

And for any non-negative predictable process $\phi$ with
respect to the backward filtration $\bF$, we have:
\[
\N^\psi\left[\int\cn(dx,d\ct,dq)\phi(q,\ct_q,\ct_{q-})\right]
=\N^\psi\left[\int\mu(dx,dT,
  dq)\phi(q,\ct_q,\ct_q\circledast (T,x))\right].
\]
\end{theo}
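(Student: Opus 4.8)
The plan is to obtain both assertions from the special Markov property of the pruning process (\cite{adv:plcrt}, Theorem~4.2, or \cite{ad:ctvmp}, Theorem~5.6), combined with the Markov property of $(\ct_\theta,\theta\ge 0)$ and its time reversal, the Girsanov transformation \reff{eq:Girsanov}--\reff{eq:Girsanov-bN}, and the definition \reff{eq:def-bN} of $\bN^\psi$. First I would record that under $\N^\psi$ the process $(\ct_\theta,\theta\ge 0)$ is a non-increasing càdlàg Markov process (\cite{ad:ctvmp}, Lemma~5.3), so that $\bF=(\cf_\theta,\theta\ge 0)$ with $\cf_\theta=\sigma(\ct_{q-},\,q\ge\theta)$ is exactly the natural backward filtration generated by $\cn=\sum_{i\in I^R}\delta_{(x_i,\ct^i,\theta_i)}$, and that the time-reversed process is again Markov with respect to $\bF$. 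By the standard characterisation of the compensator of an integer-valued random measure (it is the unique predictable random measure $\mu$ for which $\cn-\mu$ is an $\bF$-martingale measure), together with the Markov property of the reversed process, it then suffices to identify the instantaneous backward jump kernel at a state $\ct_q$: the law of the subtree grafted back onto $\ct_q$ as $\theta$ decreases through $q$, together with its grafting location. The displayed compensation formula will then be deduced from this identification by the general compensation formula for point processes.

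The heart of the argument is therefore the following consequence of the special Markov property: conditionally on $\cf_q$ (equivalently, by the Markov property, on $\ct_q$), the tree $\ct$ is recovered from $\ct_q$ by grafting onto its points a Poisson family of decorated sub-forests, each carrying its own internal pruning structure; restricting attention to the jump times $\theta_i<q$ exhibits $\cn\big|_{[0,q)}$ as a conditionally Poisson point measure, and shows that the atoms with $\theta_i$ in a small window $[q-\varepsilon,q)$ are, up to $o(\varepsilon)$, components grafted directly onto $\ct_q$. It then remains to compute the grafting intensity. By Proposition~\ref{prop:law_pruned}, $\ct_q$ is distributed as $\N^{\psi_q}$, and a mark of $M^\ct$ falling on $\ct_q$ near time $q$ is of one of two types: a mark on the skeleton, which carries the fringe subtree above it, or a mark at an infinite branching point, which removes the whole sub-forest above that point. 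A Laplace transform computation using \reff{eq:N1-es} and \reff{eq:bN-moment} (equivalently, the Girsanov identities \reff{eq:Girsanov}, \reff{eq:Girsanov-bN} applied to a $\psi_q$-tree, whose Lévy measure is $\expp{-qr}\pi(dr)$) shows that the first type contributes $2\beta\,\N^{\psi_q}[d\ct]$ and the second $\int_0^{+\infty}r\,\expp{-qr}\pi(dr)\,\P^{\psi_q}_r(d\ct)$, so that by \reff{eq:def-bN} their sum is precisely $\bN^{\psi_q}[d\ct]$; the accompanying spatial intensity is the mass measure $m^{\ct_q}(dx)$ of $\ct_q$. This yields $\mu(dx,d\ct,dq)=m^{\ct_q}(dx)\,\bN^{\psi_q}[d\ct]\,\ind_{\{q\ge0\}}dq$.

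Given the compensator, the second display is the general compensation formula for integer-valued random measures applied to $\cn$ and $\mu$, once one checks that the integrand $(q,T,x)\mapsto\phi(q,\ct_q,\ct_q\circledast(T,x))$ is $\bF$-predictable: this holds because it is built from $q$ and the $\bF$-predictable process $\theta\mapsto\ct_\theta$, the occurrence of $\ct_{q-}$ being resolved, at each atom $(x_i,\ct^i,\theta_i)$ of $\cn$, by the identity $\ct_{\theta_i-}=\ct_{\theta_i}\circledast(\ct^i,x_i)$. I expect the identification of the grafting kernel in the second paragraph — in particular the fact that the spatial intensity is the mass measure $m^{\ct_q}$ rather than the measure governing the mark locations, and the precise way the skeleton marks and the infinite-branching-point marks recombine into $\bN^{\psi_q}$ — to be the main obstacle; everything else is soft once the special Markov property is put in the form stated above. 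An alternative route, bypassing the special Markov property, would compute directly the Laplace functional of $\cn$ under $\N^\psi$ from the Bismut decomposition (Theorem~\ref{theo:Bismut}) and the Girsanov transformation, and match it against that of $\mu$.
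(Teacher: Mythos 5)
There is a genuine gap, and you name it yourself: the identification of the compensator's spatial intensity as the mass measure $m^{\ct_q}(dx)$, together with the recombination of the skeleton marks and the infinite-branching-point marks into $\bN^{\psi_q}[d\ct]$, is not an incidental obstacle but is the entire content of the theorem; everything else in your outline (backward Markov property, uniqueness of the predictable compensator, predictability of $(q,T,x)\mapsto\phi(q,\ct_q,\ct_q\circledast(T,x))$, the compensation formula) is indeed soft. The step you defer cannot be settled by ``a Laplace transform computation using \reff{eq:N1-es} and \reff{eq:bN-moment}'': those identities only control the total mass $\sigma$ under $\N^{\psi_q}$, $\P_r^{\psi_q}$ and $\bN^{\psi_q}$, whereas what is needed is a disintegration of the joint law of (cut point, removed subtree) given $\ct_q$. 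Note in particular that the marks fall according to $2\beta\,\ell^{\ct}(dy)+\sum_{x\in\mathrm{Br}_\infty(\ct)}\Delta_x\delta_x(dy)$, so the actual cut points $x_i$ lie on the skeleton of $\ct_{\theta_i}$, while the claimed compensator charges the leaves of $\ct_q$; the two can only agree after projecting onto the isometry class of $\ct_q\circledast(T,x)$, and proving this requires a genuine structural input — the special Markov property of \cite{adv:plcrt}/\cite{ad:ctvmp} in its precise quantitative form, or a Bismut-type decomposition along the cut branch under $\N^{\psi_q}$ explaining both the size-biasing $r\,\expp{-qr}\pi(dr)$ at nodes and the passage from the length-plus-node-mass intensity to $m^{\ct_q}$. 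As it stands, your argument assumes the conclusion at its only nontrivial point.

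For comparison: the paper does not prove this statement at all; it is imported verbatim from \cite{adh:etiltvp} (Theorem 3.2 and Lemma 3.3), where precisely the identification you leave open is carried out. So your proposal is best read as a plan to re-derive that external result; the plan follows the natural route, but until the grafting-kernel identification is actually executed it does not constitute a proof.
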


\section{Statement of the main result}
\label{sec:result}

We keep the notations of the previous section. 
First notice that for $i\in I^R$, $\theta(x)=\theta_i$ for every $x\in\ct^i$. 
We set
$\sigma^i=m^{\ct}(\ct^i)=\sigma_{\theta_i-}-\sigma_{\theta_i}$ and
$\sigma_q=m^{\ct}(\ct_q)$ the total mass of $\ct_q$.  By
construction, we have for every $q\ge 0$:
\[
\sigma_q=\sum_{i\in I^R} \ind_{\{\theta_i\geq q\}} \sigma^i .
\]

We set:
\[
\Theta_q=\int_{\ct_q}(\theta(x)-q)\, m^\ct(dx).
\]
The quantity $\Theta:=\Theta_0$ appears in \cite{ad:rpcrt} as the limit of 
the number of cuts on the Aldous' CRT to isolate the root. 
Since $\theta(x)$ is constant on $\ct^i$, we get:
\[
\Theta_q=\sum_{i\in I^R}\ind_{\{\theta_i\ge q\}}\, (\theta_i-q)\sigma^i
=\int_q^{+\infty}\sigma_r\, dr.
\]
For simplicity, we write $\Theta$ for $\Theta_0$ and $\sigma$ for $\sigma_0$.

We consider the random point measure $\cz^R$ on $\R_+\times\T$
defined by:
\begin{equation}
   \label{eq:def-cz}
\cz^R=\sum_{i\in I^R}\delta_{(\Theta_{\theta_i},\ct^i)}.
\end{equation}
Recall the definition of $H$ and $\cz^B$  of Subsection \ref{subsec:Bismut}.

The main result of the paper is the next theorem that identifies the
law of the pair $(H, \cz^B)$ and the pair $(\Theta,\cz)$. 

\begin{theo}\label{theo:main}
  Assume  the Grey  condition  holds. For  every non-negative  measurable
  function  $\Phi$ on $\R_+\times\T$,  and every  $\lambda>0$, $\rho\geq
  0$, we have:
\[
\N^\psi\left[\sigma \expp{-\lambda \sigma - \rho H - \langle \cz^B, \Phi
    \rangle}\right]  =\N^\psi\left[\sigma\expp{-\lambda  \sigma  -  \rho
    \Theta -\langle\cz^R,\Phi\rangle}\right].
\]
\end{theo}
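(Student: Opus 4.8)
The plan is to compute both sides as explicit functionals of $\lambda$ and $\rho$ and check they agree. The left-hand side is already evaluated by Theorem~\ref{theo:Bismut}: it equals $\int_0^{+\infty}da\, \expp{-\rho a}\exp(-\int_0^a g(\lambda,u)\,du)$ with $g(\lambda,u)=\psi'(0)+\bN^\psi[1-\expp{-\lambda\sigma-\Phi(u,\ct)}]$. So the real work is to show the right-hand side, $\N^\psi[\sigma\expp{-\lambda\sigma-\rho\Theta-\langle\cz^R,\Phi\rangle}]$, admits the same representation. The key structural fact is that the atoms of $\cz^R$ are indexed by the jump times $\theta_i$ of the pruning process, that $\Theta_{\theta_i}=\int_{\theta_i}^{+\infty}\sigma_r\,dr$ is measurable with respect to the backward filtration $\bF$ at time $\theta_i$ (it depends only on $\ct_q$ for $q\ge\theta_i$), and that $\sigma=\sigma_0=\Theta_0-\Theta$ is a constant offset. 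This is exactly the setup of Theorem~\ref{theo:Hoscheit}: the point measure $\cn=\sum_i\delta_{(x_i,\ct^i,\theta_i)}$ has, under $\N^\psi$, backward predictable compensator $\mu(dx,d\ct,dq)=m^{\ct_q}(dx)\,\bN^{\psi_q}[d\ct]\,\ind_{\{q\ge0\}}dq$.

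The main step is therefore to set up a suitable backward martingale / exponential-compensation argument. I would introduce, for the backward-in-$\theta$ dynamics, the process $Y_\theta=\sigma_\theta \exp(-\lambda\sigma_\theta-\langle \cz^R_{\ge\theta},\Phi\rangle)$ where $\cz^R_{\ge\theta}$ keeps only atoms with $\theta_i\ge\theta$ and $\Theta_{\theta_i}$ is read off the current truncated tree; more precisely I want to track $\N^\psi$-expectations of $F(\ct_\theta)\exp(-\text{(functional of the atoms already seen)})$ and derive an ODE in $\theta$ for these expectations by applying the compensation formula of Theorem~\ref{theo:Hoscheit} to the jump term. Each jump at time $q$ grafts a tree $T$ distributed (after compensation) according to $\bN^{\psi_q}$ at a point $x$ distributed according to $m^{\ct_q}$; grafting $T$ changes $\sigma_q$ to $\sigma_q+\sigma^T$ (so multiplies the $\expp{-\lambda\sigma}$ factor by $\expp{-\lambda\sigma^T}$), multiplies by $\expp{-\Phi(\Theta_q+\cdots,\ct')}$ — and here one uses that $\Theta_{\theta_i}$ for this new atom equals $\Theta_q$ evaluated on the smaller tree — and the $m^{\ct_q}(dx)$ integration just produces a factor $\sigma_q$. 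Combined with the deterministic drift $\Theta_\theta$ decreasing at rate $\sigma_\theta\,d\theta$ (giving the $\expp{-\rho\Theta}$ weight when $\rho$ is carried along), the compensator contribution at time $q$ is $\sigma_q\big(\psi'(0)+\bN^{\psi}[1-\expp{-\psi(q)\sigma-\lambda\sigma-\Phi(\cdot,\ct)}]\big)$ after using the Girsanov identity \reff{eq:Girsanov-bN} to rewrite $\bN^{\psi_q}$ in terms of $\bN^\psi$. Matching with $g$ requires identifying $\psi(q)$, as it appears in the Girsanov weight, with the variable $u$ integrated in $\int_0^a g(\lambda,u)\,du$: the change of variables $u=\psi(q)$ (so $du=\psi'(q)\,dq$) is what converts the $q$-integral into the $u$-integral of Theorem~\ref{theo:Bismut}, and the extra factor $\psi'(q)$ is absorbed by the moment identities \reff{eq:N1-es}, \reff{eq:bN-moment}.

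Concretely I would let $A(\theta)=\N^\psi[\sigma_\theta\expp{-\lambda\sigma_\theta-\rho\Theta_\theta-\langle\cz^R_{\ge\theta},\Phi\rangle}]$ (with the obvious conventions so that $A(0)$ is the right-hand side of the theorem, up to handling the $\rho H$ versus $\rho\Theta$ bookkeeping via $\Theta=\Theta_0$), write the backward evolution $dA$ as the sum of the deterministic drift $-\rho\,\N^\psi[\sigma_\theta^2\cdots]\,d\theta$ coming from $\partial_\theta\Theta_\theta=-\sigma_\theta$ and the jump term controlled by $\mu$, and show $A$ solves a linear ODE whose solution is precisely $\int_\cdot^{+\infty}$-type integral matching Theorem~\ref{theo:Bismut}. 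Boundary behaviour as $\theta\to+\infty$ (the tree dies, $\sigma_\theta\to0$, so $A(\theta)\to0$) pins down the constant. The hard part will be making the backward-martingale manipulation rigorous: justifying the use of the predictable compensation formula for the unbounded functionals involved (the weight $\sigma_\theta$ and the sum over infinitely many small atoms), checking integrability so that the ODE derivation is legitimate, and carefully tracking that $\Theta_{\theta_i}$ is the $\cf_{\theta_i}$-measurable quantity fed into $\Phi$ — i.e. that "the grafting position of $\ct^i$ is $\Theta_{\theta_i}$ computed from the already-pruned tree" is exactly the predictable evaluation point, so that no anticipation sneaks in. Once that is in place, the rest is the bookkeeping with \reff{eq:N1-es}, \reff{eq:bN-moment}, \reff{eq:bN-moment-1} and the substitution $u=\psi(q)$ described above.
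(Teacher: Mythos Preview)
Your overall architecture --- backward-in-$q$ compensation via Theorem~\ref{theo:Hoscheit}, combined with the Girsanov relation \reff{eq:Girsanov-bN} --- is exactly the engine the paper uses to prove its key Proposition~\ref{prop:laplace-sig-the-N}. But the step where you close the argument is wrong. You write that ``matching with $g$ requires identifying $\psi(q)$ \dots\ with the variable $u$ integrated in $\int_0^a g(\lambda,u)\,du$: the change of variables $u=\psi(q)$''. This misidentifies the roles of the two arguments of $g$. In $g(\lambda,u)$ the second variable $u$ is the \emph{position} fed into $\Phi$, which on the $\cz^R$ side is $\Theta_q$ --- a random quantity, not a deterministic function of $q$. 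The $\psi(q)$ coming from Girsanov on $\bN^{\psi_q}$ lands in the \emph{first} argument: one gets $\bN^{\psi_q}\!\big[1-\expp{-\lambda\sigma-\Phi(\Theta_q,\ct)}\big]=g(\lambda+\psi(q),\Theta_q)-\psi'(q)$. So no deterministic substitution $u=\psi(q)$ will turn your $q$-integral into the Bismut $u$-integral; the natural ``change of variables'' $u=\Theta_q$ (with $du=-\sigma_q\,dq$) is random, and you are still left with a mismatched first argument $\lambda+\psi(q)$ versus the target $\lambda$.

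The paper resolves both issues indirectly. It does not try to produce the Bismut integral from the $\cz^R$ side. Instead, the backward-compensator computation, combined with a second expression for $\partial_q$ obtained from the outer Girsanov \reff{eq:Girsanov}, yields the functional identity
\[
\N^\psi\!\left[\sigma\big(\rho+g(\lambda,\Theta)\big)\expp{-\lambda\sigma-\rho\Theta-\langle\cz^R,\Phi\rangle}\right]=1,
\]
i.e.\ equation~\reff{eq:gq=1}; here the $\lambda+\psi(q)$ shift disappears precisely because the outer Girsanov undoes it. Specializing to $\Phi$ independent of its first argument gives that $(\sigma,\Theta)$ and $(\sigma,H)$ have the same law (Corollary~\ref{cor:first-eq-law}). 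The final step conditions on $(\sigma,\Theta)$, transfers \reff{eq:gq=1} to $(\sigma,H)$ via this equality in law, and then reads off from the resulting Laplace-in-$\rho$ identity an ODE for the conditional expectation that matches the Bismut formula. In short: your ODE-in-$q$ plan cannot be closed by a deterministic substitution; the missing idea is to stop at the functional equation \reff{eq:gq=1} and then pivot through the law of $(\sigma,H)$.
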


In particular $\Theta$ is distributed as the height $H$ of a leaf chosen
according to the  normalized mass measure on the  Lévy tree. 

Recall that
$\lim_{\varepsilon\rightarrow 0  } \N^\psi[\sigma>\varepsilon]=+\infty $
and $\lim_{\varepsilon  \rightarrow 0 } \N^\psi[\sigma\ind_{\{\sigma\leq
  \varepsilon \}}]=0$, as well as:
\[
\lim_{\varepsilon \rightarrow 0 }
\inv{\varepsilon}\N^\psi[\sigma\ind_{\{\sigma\leq \varepsilon \}}]=+\infty 
\] 
thanks to Lemma  4.1 from \cite{d:fhalp} (which is  stated for $\beta=0$
but which  also holds  for $\beta>0$).  The  next corollary is  a direct
consequence  of Theorem  \ref{theo:main} and  the properties  of Poisson
point  measures for  the Bismut  decomposition (see  Proposition  4.2 in
\cite{d:fhalp} for a proof of similar results).

\begin{cor}
   \label{cor:asymp-ti}
 Assume  the  Grey  condition  holds.    $\N^\psi$-a.e., we have:
\[
\lim_{\varepsilon\to 0}\frac{1}{\N^\psi[\sigma>\varepsilon]}
\sum_{i\in I^R}\ind_{\{\sigma^i\geq 
  \varepsilon\}}=\Theta.
\]
$\N^\psi$-a.e., for any positive sequence $(\varepsilon_n, n\geq 0)$
 converging  to $0$, there exists a subsequence $(\varepsilon_{n_k},
 k\geq 0)$ such that:
\[
\lim_{k\to+\infty}\inv{\N^\psi[\sigma\ind_{\{\sigma\leq \varepsilon_{n_k}\}}]}
\sum_{i\in I^R}\sigma^i\ind_{\{\sigma^i\le  \varepsilon_{n_k}\}}= \Theta.
\]

When  $\psi$ is  regularly  varying at  infinity  with index  $\gamma\in
(1,2]$, the previous convergence holds
$\N^\psi$-a.e.
\end{cor}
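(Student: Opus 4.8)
The plan is to transport the two sums over $I^R$ to the Bismut side via Theorem~\ref{theo:main} and to exploit the Poissonian structure of the Bismut decomposition, in the spirit of Proposition~4.2 of \cite{d:fhalp}. The identity of Laplace functionals in Theorem~\ref{theo:main}, being valid for every $\lambda>0$, every $\rho\ge 0$ and every non-negative measurable $\Phi$ on $\R_+\times\T$, determines and identifies the law of the triple $(\sigma,\Theta,\cz^R)$ with that of $(\sigma,H,\cz^B)$ under the $\sigma$-finite measure $\sigma\,\N^\psi[d\ct]$. Since $0<\sigma<+\infty$ $\N^\psi$-a.e., the measures $\N^\psi$ and $\sigma\,\N^\psi$ are mutually absolutely continuous, so it suffices to prove both convergences for $(H,\cz^B)$ under $\sigma\,\N^\psi$, the masses $(\sigma^i)_{i\in I^R}$ being the masses of the atoms of $\cz^B$. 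By Theorem~\ref{theo:Bismut}, under $\sigma\,\N^\psi$ and conditionally on $H$, the point measure $\cz^B$ is a Poisson point measure on $[0,H]\times\T$ with intensity $du\,\bN^\psi[d\ct]$; in particular, conditionally on $H$, the masses of its atoms form a Poisson point measure on $(0,+\infty)$ with intensity $H\,\bN^\psi[\sigma\in ds]$.

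For the first limit, let $N_\varepsilon$ be the number of atoms of $\cz^B$ of mass at least $\varepsilon$. By \reff{eq:bN-moment}, $\bN^\psi[\sigma\ge\varepsilon]$ is finite for $\varepsilon>0$ and, since the Grey condition forces $\psi'(q)\to+\infty$, one has $\bN^\psi[\sigma\ge\varepsilon]\to+\infty$ as $\varepsilon\to 0$. Conditionally on $H$, $N_\varepsilon$ is a Poisson variable of mean $H\,\bN^\psi[\sigma\ge\varepsilon]$, and $\varepsilon\mapsto N_\varepsilon$ is obtained from a standard Poisson process by the deterministic time change $\varepsilon\mapsto H\,\bN^\psi[\sigma\ge\varepsilon]$; in particular it is non-decreasing as $\varepsilon\downarrow 0$. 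The strong law of large numbers for the Poisson process gives $N_\varepsilon/(H\,\bN^\psi[\sigma\ge\varepsilon])\to 1$ along any sequence on which $H\,\bN^\psi[\sigma\ge\varepsilon]$ runs through $\N$ (Borel--Cantelli with Poisson concentration), and the monotonicity of $N_\varepsilon$ together with the continuity of $\varepsilon\mapsto\bN^\psi[\sigma\ge\varepsilon]$ fills the gaps to give the full limit $\N^\psi$-a.e. Since $(\Theta,\cz^R)$ and $(H,\cz^B)$ have the same law, and after relating the normalisation $\bN^\psi[\sigma\ge\varepsilon]$ to $\N^\psi[\sigma>\varepsilon]$ via the moment identities \reff{eq:N1-es} and \reff{eq:bN-moment}, this yields the first assertion.

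For the second limit, let $S_\varepsilon$ be the sum of the masses of those atoms of $\cz^B$ that are at most $\varepsilon$. Conditionally on $H$, $S_\varepsilon$ is a Poisson integral of mean $H\,\bN^\psi[\sigma\ind_{\{\sigma\le\varepsilon\}}]$ and variance $H\,\bN^\psi[\sigma^2\ind_{\{\sigma\le\varepsilon\}}]\le\varepsilon\,H\,\bN^\psi[\sigma\ind_{\{\sigma\le\varepsilon\}}]$, both finite by \reff{eq:bN-moment-1}. As $\varepsilon/\bN^\psi[\sigma\ind_{\{\sigma\le\varepsilon\}}]\to 0$ (Lemma~4.1 of \cite{d:fhalp} together with \reff{eq:Girsanov-bN} for the $\bN^\psi$ version), Chebyshev's inequality gives $S_\varepsilon/(H\,\bN^\psi[\sigma\ind_{\{\sigma\le\varepsilon\}}])\to 1$ in probability. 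Since $\varepsilon\mapsto S_\varepsilon$ is not monotone, for a general positive sequence $\varepsilon_n\to 0$ one extracts, by a summable choice of the Chebyshev bounds and Borel--Cantelli, a subsequence along which the convergence is a.e.; comparing $\bN^\psi[\sigma\ind_{\{\sigma\le\varepsilon\}}]$ with $\N^\psi[\sigma\ind_{\{\sigma\le\varepsilon\}}]$ through \reff{eq:N1-es} and \reff{eq:bN-moment-1} then gives the stated subsequential limit. When $\psi$ is regularly varying at infinity of index $\gamma\in(1,2]$, the functions $\varepsilon\mapsto\bN^\psi[\sigma\ind_{\{\sigma\le\varepsilon\}}]$ and $\varepsilon\mapsto\bN^\psi[\sigma^2\ind_{\{\sigma\le\varepsilon\}}]$ are regularly varying at $0$, so a geometric sequence $\varepsilon_n$ already has summable Chebyshev bounds and the increments of $S_\varepsilon$ between consecutive terms are uniformly negligible; this upgrades the convergence to almost surely along the full limit.

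The probabilistic skeleton above (reduction to the Bismut point measure, conditional Poisson structure, law of large numbers) is routine; I expect the two delicate points to be the quantitative ones. The first is the asymptotic comparison, as $\varepsilon\to 0$, between the $\bN^\psi$-moments produced by the Bismut intensity and the $\N^\psi$-moments used to normalise in the statement, a Tauberian-type estimate resting on \reff{eq:N1-es}, \reff{eq:bN-moment} and \reff{eq:bN-moment-1} (and on regular variation for the last claim). The second is the passage from convergence in probability to almost sure convergence for the non-monotone sum $S_\varepsilon$ in the general case; both are handled exactly as in Proposition~4.2 of \cite{d:fhalp}, whose arguments transfer verbatim and also yield the regularly varying refinement.
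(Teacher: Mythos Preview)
Your approach is exactly the paper's: the authors state that the corollary is ``a direct consequence of Theorem~\ref{theo:main} and the properties of Poisson point measures for the Bismut decomposition (see Proposition~4.2 in \cite{d:fhalp} for a proof of similar results)'', and you carry out precisely this reduction, transferring $(\Theta,\cz^R)$ to $(H,\cz^B)$ and then running the Poisson law of large numbers conditionally on $H$. Your flagging of the comparison between the $\bN^\psi$-intensity in the Bismut picture and the $\N^\psi$-normalisation in the statement as the genuinely delicate step is appropriate; the paper does not spell this out either and defers it to \cite{d:fhalp}.
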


\section{Proof of the main result}
\label{sec:proof}

\subsection{Preliminaries results}

We first state a basic lemma.

\begin{lem}\label{lem:sumJ1}
Let $\cn_1=\sum_{j\in J_1}\delta_{r_j,x_j}$ be a point measure on
$[0,+\infty)$. If $\sum_{j\in J_1}x_j<+\infty$, then for every $r\ge
  0$, we have:
\begin{equation}\label{eq:sumJ1}
1-\exp\left(-\sum_{j\in J_1}\ind_{\{r_j\ge r\}}x_j\right)=\sum_{j\in
    J_1}\ind_{\{r_j\ge r\}}(1-\expp{-x_j})\exp\left(-\sum_{\ell\in
    J_1}\ind_{\{r_\ell >r_j\}}x_\ell\right).
\end{equation}
\end{lem}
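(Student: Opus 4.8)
The plan is to prove the identity \reff{eq:sumJ1} by a telescoping argument. First I would enumerate the atoms of $\cn_1$ with $r_j \ge r$ in decreasing order of $r_j$-value; since $\sum_{j} x_j < +\infty$, only finitely many atoms can have $x_j$ above any positive threshold, but to handle possibly infinitely many small atoms I would argue by a limiting/approximation procedure (or simply note that both sides are continuous under adding atoms of total mass tending to zero, so it suffices to treat the case where $J_1$ is finite and then pass to the limit).

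Assuming $J_1$ finite, I would relabel the indices $j$ with $r_j \ge r$ as $j_1, j_2, \ldots, j_N$ so that $r_{j_1} > r_{j_2} > \cdots$ (ties can be broken arbitrarily, or absorbed, since the weights just add). Write $S = \sum_{j \in J_1} \ind_{\{r_j \ge r\}} x_j = x_{j_1} + \cdots + x_{j_N}$ and $S_k = x_{j_k} + x_{j_{k+1}} + \cdots + x_{j_N}$, so $S = S_1$ and $S_{N+1} = 0$. The key observation is the elementary telescoping identity
\[
1 - \expp{-S_1} = \sum_{k=1}^{N} \left( \expp{-S_{k+1}} - \expp{-S_k} \right)
= \sum_{k=1}^{N} \expp{-S_{k+1}} \left( 1 - \expp{-x_{j_k}} \right),
\]
using $S_k = x_{j_k} + S_{k+1}$. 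It then remains to identify $S_{k+1} = \sum_{\ell \in J_1} \ind_{\{r_\ell > r_{j_k}\}} x_\ell$, which holds exactly because, in the decreasing enumeration, the atoms $j_{k+1}, \ldots, j_N$ are precisely those with $r$-value strictly smaller than $r_{j_k}$ among the atoms above level $r$ — and atoms with $r_\ell < r$ do not satisfy $r_\ell > r_{j_k}$ since $r_{j_k} \ge r$. Substituting gives exactly the right-hand side of \reff{eq:sumJ1}.

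The main subtlety — not really an obstacle, but the point requiring care — is the treatment of the general (possibly countably infinite) index set and of ties among the $r_j$. For ties, one can replace a group of atoms sharing a common $r$-value by a single atom with the summed weight, which changes neither side of the identity; alternatively one checks the telescoping still goes through with non-strict/strict inequalities placed consistently. For the infinite case, one orders the atoms with $r_j \ge r$ so that $(x_{j_k})$ can be listed and notes that $\sum x_{j_k} < \infty$ forces the tails $S_{k+1} \to 0$; then the finite telescoping identity for the first $N$ terms plus a uniformly controlled remainder (bounded by $x_{j_{N+1}} + x_{j_{N+2}} + \cdots \to 0$) yields the full identity by letting $N \to \infty$. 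Both series on the right converge absolutely since $0 \le 1 - \expp{-x_{j_k}} \le x_{j_k}$.

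Since this is the "basic lemma" feeding the Laplace-transform computation of Theorem \ref{theo:main} (it is the deterministic combinatorial skeleton behind the identity relating $\cz^R$ to the Bismut point process — each removed subtree $\ct^i$ contributes a factor $1 - \expp{-\langle \cn,\ldots\rangle}$ weighted by the exponential of the masses of subtrees pruned strictly later), I would keep the statement and proof purely deterministic and self-contained, and not invoke any tree structure here. The one-line takeaway: \reff{eq:sumJ1} is just the telescoping expansion of $1 - \expp{-S}$ across the partial sums obtained by peeling off atoms in decreasing order of $r_j$.
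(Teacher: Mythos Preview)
Your overall strategy---telescoping for the finite case, then a limiting argument---is exactly the paper's approach (the paper truncates to $\{j: x_j\ge\varepsilon\}$ and passes to the limit by monotone/dominated convergence, which is essentially your ``both sides are continuous under adding atoms of total mass tending to zero'').

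However, there is a sign/direction error in your telescoping step. With the \emph{decreasing} enumeration $r_{j_1}>r_{j_2}>\cdots>r_{j_N}$ and $S_{k+1}=x_{j_{k+1}}+\cdots+x_{j_N}$, the atoms $j_{k+1},\ldots,j_N$ are those with $r$-value strictly \emph{smaller} than $r_{j_k}$ (as you yourself write), so
\[
S_{k+1}=\sum_{\ell\in J_1}\ind_{\{r\le r_\ell< r_{j_k}\}}x_\ell,
\]
which is \emph{not} $\sum_{\ell}\ind_{\{r_\ell>r_{j_k}\}}x_\ell$. Your identification therefore contradicts the very sentence that follows it. The fix is immediate: enumerate in \emph{increasing} order $r_{j_1}<r_{j_2}<\cdots<r_{j_N}$; then $S_{k+1}=x_{j_{k+1}}+\cdots+x_{j_N}$ is precisely the sum over $\{r_\ell>r_{j_k}\}$ (and atoms with $r_\ell<r$ are automatically excluded since $r_{j_k}\ge r$). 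Alternatively, keep the decreasing order but use prefix sums $T_{k-1}=x_{j_1}+\cdots+x_{j_{k-1}}$ in the telescoping, which also gives $T_{k-1}=\sum_\ell\ind_{\{r_\ell>r_{j_k}\}}x_\ell$. Either way the telescoping identity $\sum_k(e^{-T_{k-1}}-e^{-T_k})=1-e^{-S}$ yields \reff{eq:sumJ1}.
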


\begin{proof}
The result is obvious for $J_1$ finite. For the infinite case,  for
$\varepsilon>0$ consider the finite set:
\[
J_{1,\varepsilon}=\{j\in J_1,\ x_j\ge \varepsilon\}.
\]
Apply Formula \reff{eq:sumJ1} with $J_1$ replaced by
$J_{1,\varepsilon}$ and then conclude by letting $\varepsilon$ tend to
  0 thanks to monotone convergence and dominated convergence.
\end{proof}

Since $\ct_q$ is distributed  according to $\N^{\psi_q}$, we deduce from
\reff{eq:N1-es} together with \reff{eq:Girsanov-bN} that for $q>0$:
\begin{equation}
   \label{eq:Ns1}
\N^\psi[\sigma_q]
=\N^{\psi_q}[\sigma]=\frac{1}{\psi'(q)}, 
\quad 
\N^\psi[\sigma_q^2]
=\N^{\psi_q}[\sigma^2] =\frac{\psi''(q)}{\psi'(q)^3}
\cdot
\end{equation}

\subsection{Laplace transform of $(\sigma,\Theta,\cz^R)$}

\begin{prop}\label{prop:laplace-sig-the-N}
  Let    $\Phi$    be   a    non-negative    measurable   function    on
  $\R_+\times\T$.  Assume that
  $\langle\cz^R,\Phi\rangle   <+\infty$   $\N^\psi$-a.e.   and   for   all
  $\lambda>0$, $\sup_{u\geq 0} g(\lambda,  u)<+\infty $ with $g$ defined
  by \reff{eq:def-g}. Then, for all  $\lambda> 0$ and $\rho\ge 0$,
  we have:
\begin{equation}
   \label{eq:gq=1}
\N^\psi\left[\sigma\bigl(\rho+g(\lambda,\Theta)\bigr)
  \expp{-\lambda\sigma-\rho\Theta-\langle\cz^R,\Phi\rangle}\right]=1. 
\end{equation}
\end{prop}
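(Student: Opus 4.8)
The plan is to establish \reff{eq:gq=1} by differentiating with respect to the pruning parameter, exploiting the martingale structure provided by Theorem \ref{theo:Hoscheit}. First I would note that, since $\ct_q\subset\ct$ and $\Theta=\int_0^{+\infty}\sigma_r\,dr$, the pruning at rate $q$ transforms $(\sigma,\Theta,\cz^R)$ in a controlled way: the point measure $\cz^R$ restricted to the trees removed after time $q$ is, by the special Markov property, exactly the analogue of $\cz^R$ built on $\ct_q$ (distributed as $\N^{\psi_q}$) but with the second coordinate shifted by $\Theta_q$, namely $\Theta_{\theta_i}=\Theta_q+\Theta_{\theta_i}^{(q)}$ where the superscript denotes the quantity computed inside $\ct_q$. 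The key computational identity to write down is therefore a ``shift formula'': conditionally on $\ct_q$ (and on $\Theta_q$, which is $\cf_q$-measurable up to the contribution of $\ct_q$ itself), the law of the part of $\cz^R$ coming from $\ct_q$ is that of $\cz^R$ under $\N^{\psi_q}$, with atoms $(\Theta_q+\Theta^{(q)}_{\theta_i},\ct^i)$.

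Next I would apply Lemma \ref{lem:sumJ1} with $J_1=I^R$, $r_j=\theta_j$, and $x_j=\lambda\sigma^j+\Phi(\Theta_{\theta_j},\ct^j)$ (checking the summability hypothesis $\sum_j x_j<+\infty$, which follows from $\sigma<+\infty$ and $\langle\cz^R,\Phi\rangle<+\infty$). This rewrites $1-\expp{-\lambda\sigma-\langle\cz^R,\Phi\rangle}$, after inserting the factor $\expp{-\rho\Theta}$ appropriately and recognizing that $\sum_{\ell}\ind_{\{\theta_\ell>\theta_j\}}x_\ell$ corresponds to the contribution of $\ct_{\theta_j+}$, as a sum over $i\in I^R$ of a ``jump term'' times the Laplace functional evaluated on $\ct_{\theta_i}$. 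The idea is that the left-hand side of the desired identity \reff{eq:gq=1}, namely $\N^\psi[\sigma(\rho+g(\lambda,\Theta))\expp{\cdots}]$, should be exactly the compensator prediction of a sum over the atoms of $\cn$, so that the compensation formula in Theorem \ref{theo:Hoscheit} turns that sum into an integral $\int\mu(dx,dT,dq)(\cdots)$ against $m^{\ct_q}(dx)\bN^{\psi_q}[d\ct]\ind_{\{q\ge 0\}}dq$. The term $g(\lambda,\Theta)=\psi'(0)+\bN^\psi[1-\expp{-\lambda\sigma-\Phi(\Theta,\ct)}]$ is precisely what one gets after integrating $1-\expp{-\lambda\sigma^i-\Phi(\Theta_{\theta_i},\ct^i)}$ against $\bN^{\psi_q}[d\ct]$ and using the Girsanov relation \reff{eq:Girsanov-bN} to pass from $\bN^{\psi_q}$ to $\bN^\psi$ with the weight $\expp{-\psi(q)\sigma}$; here one must track carefully that the ``location'' variable on the branch $B^R$ is $\Theta_q$, not $q$, which explains why $g$ is evaluated at $\Theta$ rather than at a pruning time. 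The factor $\rho$ comes from differentiating $\expp{-\rho\Theta_q}$ in $q$ (since $\partial_q\Theta_q=-\sigma_q$), or equivalently from the $\rho$-contribution in the telescoping/Lemma \ref{lem:sumJ1} expansion.

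Concretely, I would proceed as follows: (1) write $1=\N^\psi[\sigma\cdot\text{(something)}]$ — more precisely start from the trivial identity $\N^\psi[\sigma(1-\expp{-\lambda\sigma-\rho\Theta-\langle\cz^R,\Phi\rangle})\cdot 0]$ is not it; rather, take the identity $\N^\psi[\sigma e^{-\lambda\sigma-\rho\Theta-\langle\cz^R,\Phi\rangle}]$ and show it equals $1$ minus a boundary term, by integrating the $q$-derivative of $q\mapsto\N^\psi[\sigma e^{-\lambda\sigma_q\cdots}]$-type quantities; (2) use the shift/special-Markov structure to identify the jump of the relevant additive functional at each $\theta_i$; (3) invoke the compensation formula of Theorem \ref{theo:Hoscheit} to replace $\sum_{i\in I^R}(\text{jump})$ by the integral against $\mu$; (4) compute that integral using \reff{eq:bN-moment}, \reff{eq:Girsanov-bN}, \reff{eq:Ns1} and the definition \reff{eq:def-g} of $g$, recognizing the result as $\N^\psi[\sigma(\rho+g(\lambda,\Theta))e^{\cdots}]$; (5) check that the boundary terms at $q=0$ and $q\to+\infty$ contribute exactly $1$ (using $\sigma_q\to 0$, $\Theta_q\to 0$ as $q\to\infty$, so $e^{-\lambda\sigma_q-\cdots}\to 1$, together with $\N^\psi[\sigma]=\N^\psi[\sigma_q]\to\infty$ handled through the $\bN$-mass rather than directly). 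The main obstacle I anticipate is step (2)–(3): making rigorous the interplay between the backward filtration $\bF$, the predictability of the integrand $q\mapsto(\rho+g(\lambda,\Theta_q))e^{-\lambda\sigma_q-\rho\Theta_q-\langle\cz^R_{[q]},\Phi\rangle}$ (where $\cz^R_{[q]}$ is the part of $\cz^R$ with $\theta_i<q$), and the shift of $\Phi$'s first argument by $\Theta_q$, so that Theorem \ref{theo:Hoscheit} applies with $\phi(q,\ct_q,\ct_q\circledast(T,x))$ chosen to reproduce exactly the jump of the functional; getting the bookkeeping of $\Theta_q$ versus $q$ right, and verifying the integrability needed to apply the compensation formula (finiteness of $\sup_u g(\lambda,u)$ is used here), is the delicate part.
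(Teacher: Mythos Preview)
Your overall strategy---expand via Lemma \ref{lem:sumJ1}, apply the compensation formula of Theorem \ref{theo:Hoscheit}, and differentiate in the pruning parameter $q$---is the same as the paper's. But you have not located where the ``$1$'' comes from, and your step (5) is wrong: $\N^\psi[\sigma_q]=1/\psi'(q)\to 0$ as $q\to+\infty$, not $+\infty$, and the paper does \emph{not} obtain the identity by integrating a derivative over $[0,\infty)$ and reading off boundary terms.

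The missing idea is a \emph{double computation} of $\partial_q\varphi_q$, where $\varphi_q=\N^\psi[1-\exp(-\lambda\sigma_q-\rho\Theta_q-Z_q)]$ (after an $\varepsilon$-truncation $\sigma^i\ge\varepsilon$ to make everything finite). On one hand, Lemma \ref{lem:sumJ1} plus Theorem \ref{theo:Hoscheit} gives $\varphi_q=\int_q^\infty H_{r,\lambda}(q)\,dr$, and careful differentiation (this is where the $\varepsilon$-truncation and the bounds \reff{eq:bN-moment-1} are used) yields
\[
\partial_q\varphi_q = -\N^\psi\bigl[\sigma_q\,G_q^\varepsilon(\lambda,\Theta_q)\,\expp{-\cdots}\bigr] - \rho\,\partial_\lambda\varphi_q.
\]
On the other hand, Girsanov \reff{eq:Girsanov} gives $\varphi_q=\N^\psi[(1-\expp{-\cdots})\expp{-\psi(q)\sigma}]$, so
\[
\partial_q\varphi_q = -1 + \psi'(q)\,\N^\psi\bigl[\sigma_q\,\expp{-\cdots}\bigr],
\]
using $\psi'(q)\N^\psi[\sigma_q]=1$. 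Equating the two expressions and noting (via \reff{eq:Girsanov-bN} and \reff{eq:bN-moment}) that $\psi'(q)+G_q^0(\lambda,t)=g(\lambda+\psi(q),t)$ gives exactly \reff{eq:gq=1} at the shifted parameter $\lambda+\psi(q)$; one more Girsanov removes the shift. Your plan never articulates this Girsanov-side derivative, which is the source of the constant $1$; without it you are left trying to extract $1$ from ill-defined boundary terms. You should also introduce the $\varepsilon$-truncation explicitly: it is what makes the differentiation under $\N^\psi$ and the dominated-convergence passage to $\varepsilon\downarrow 0$ rigorous.
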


\begin{proof}
For every $\varepsilon>0$, $q\geq 0$, we set:
\[
\sigma_q^{\varepsilon}
=\sum_{i\in I^R}\ind_{\{\theta_i\ge
  q\}}\ind_{\{\sigma^i\geq \varepsilon\}}\sigma^i, 
\qquad 
\Theta_q^{\varepsilon}
=\sum_{i\in I^R }\ind_{\{\theta_i\ge
  q\}}\ind_{\{\sigma^i\geq \varepsilon\}} \sigma^i(\theta^i-q),
\]
and
\[
Z_q^{\varepsilon}
=\sum_{i\in I^R}\ind_{\{\theta_i\ge
  q\}}\ind_{\{\sigma^i\geq \varepsilon\}}
\Phi(\Theta_{\theta_i},\ct^i), 
\qquad
Z_q=\sum_{i\in I^R}\ind_{\{\theta_i\ge
  q\}}\Phi(\Theta_{\theta_i},\ct^i),
\]
so that $Z_0= \langle \cz^R, \Phi \rangle$. 
For every $\varepsilon>0$, $q>0$, we set: 
\[
\varphi^\varepsilon_q(\lambda,\rho)
=\N^\psi\left[1-\exp(-\lambda\sigma_q^{\varepsilon}-
  \rho\Theta_q^{\varepsilon}-Z_q^{\varepsilon})\right]. 
\]
Since    $\langle\cz^R,\Phi \rangle$    is    finite by assumption,   we    get    that
$Z_q^{\varepsilon}$ is finite. We use Lemma \ref{lem:sumJ1} to get:
\begin{multline*}
\varphi^\varepsilon_q(\lambda,\rho)
=\N^\psi\left[\sum_{i\in I^R }\ind_{\{ \theta_i\ge
    q\}}\ind_{\{\sigma^i\ge     \varepsilon\}}
  \biggl(1-\exp\Bigl(-\bigl(\lambda+\rho(\theta_i-q)\bigr) 
  \sigma^i-\Phi(\Theta_{\theta_i},\ct^i)\Bigr)  \biggr)\right.\\
\left(\exp\left(-\sum_{\ell\in I^R }\ind_{\{
      \theta_\ell>\theta_i\}} \ind_{\{\sigma^\ell\ge \varepsilon\}}
    \Bigl(\bigl( \lambda+\rho(\theta_\ell-q)\bigr) \sigma^\ell
    +\Phi(\Theta_{\theta_\ell},\ct^\ell) \Bigr) \right)\right]. 
\end{multline*}
Then,   if    we   use   Theorem    \ref{theo:Hoscheit}   (recall   that
$\sigma_q=m^{\ct_q}(\ct_q)$), we get:
\[
\varphi^\varepsilon_q(\lambda,\rho)
=\N^\psi\left[\int_q^{+\infty }dr\,
  \sigma_r\,
G_r^\varepsilon(\lambda+\rho(r-q),\Theta_r)\, \,
  \exp\Bigl(-\bigl(\lambda+\rho(r-q)\bigr)\sigma_r^{\varepsilon}
-\rho\Theta_r^{\varepsilon} -Z_r^{\varepsilon}
\Bigr)\right], 
\]
with
\[
G_r^\varepsilon(\kappa,t)=
\bN^{\psi_r}\left[\ind_{\{\sigma\ge\varepsilon\}}  \left(1-\expp{-
 \kappa\sigma-\Phi(t,\ct)}\right)\right].
\]
Thanks to \reff{eq:bN-moment-1} and \reff{eq:Girsanov-bN}, we get:
\begin{equation}
   \label{eq:majoG}
0\leq G_r^\varepsilon(\kappa,t)\leq
\bN^{\psi_r}\left[\ind_{\{\sigma\ge\varepsilon\}}  
\right] \leq  \inv{\varepsilon} \bN^{\psi_r}\left[\sigma\right] 
=\inv{\varepsilon}\frac{\psi''(r)}{\psi'(r)}\cdot
\end{equation}
Since $\psi''$  is non-increasing and $\psi'$ is  non-decreasing, we get
that    for    fixed     $q>0$,    the    map    $r\mapsto    \partial_r
\left(\frac{-\psi''(r)}{\psi'(r)}  \right)$ is non-negative  and bounded
for $r>q$.  We deduce from \reff{eq:bN-moment-1} and
\reff{eq:Girsanov-bN} that:
\[
 \bN^{\psi_r}\left[\ind_{\{\sigma\ge\varepsilon\}}  \sigma \expp{-
 \kappa\sigma-\Phi(t,\ct)}\right]
\leq  \inv{\varepsilon} \bN^{\psi_r}\left[\sigma^2\right] 
=\inv{\varepsilon}
\inv{\psi'(r)} \partial_r
   \left(\frac{-\psi''(r)}{\psi'(r)} \right).
\]
We deduce that the  map $\kappa  \mapsto  G_r^\varepsilon(\kappa,t)$  is
$\cc^1$   and:
\begin{equation}
   \label{eq:deriv-G}
0\leq  \partial_\kappa
G_r^\varepsilon(\kappa,t)
=\bN^{\psi_r}\left[\ind_{\{\sigma\ge\varepsilon\}}  \sigma \expp{-
 \kappa\sigma-\Phi(t,\ct)}\right]
\leq  \inv{\varepsilon}
\inv{\psi'(r)} \partial_r
   \left(\frac{-\psi''(r)}{\psi'(r)} \right). 
\end{equation}

We set:
\[
H^\varepsilon_{r,\lambda}(q)=\N^\psi\left[
  \sigma_r\,
G_r^\varepsilon(\lambda+\rho(r-q),\Theta_r)\, \,
  \exp\Bigl(-\bigl(\lambda+\rho(r-q)\bigr)\sigma_r^{\varepsilon}
-\rho\Theta_r^{\varepsilon}-Z_r^{\varepsilon}  \Bigr)\right], 
\]
so that:
\[
\varphi^\varepsilon_q(\lambda,\rho)= \int_q^{+\infty} H^\varepsilon_{r,\lambda}(q)\, dr.
\]
Thanks to \reff{eq:majoG} and \reff{eq:Ns1}, we get $ 0\leq H^\varepsilon_{r,\lambda}(q)\leq
\varepsilon^{-1} \psi''(r)/\psi'(r)^2$. This implies in turn that
$\varphi^\varepsilon_q(\lambda, \rho)\leq \varepsilon^{-1}/ \psi'(q)$. 

For $r>0$, $\kappa>0$,  we set:
\[
h^\varepsilon_{r}(\kappa)=\N^\psi\left[
  \sigma_r\, \left(\partial_\kappa
G_r^\varepsilon(\kappa,\Theta_r)
+\sigma_r^\varepsilon G_r^\varepsilon(\kappa,\Theta_r)
\right)\,   \expp{-\kappa\sigma_r^{\varepsilon}
-\rho\Theta_r^{\varepsilon}-Z_r^{\varepsilon} }\right]. 
\]
Since $\sigma_
r^{\varepsilon}\leq \sigma_r$, we have, using
\reff{eq:Ns1}:
\begin{align*}
0\leq h^\varepsilon_{r}(\kappa)
&\leq  \inv{\varepsilon}
\N^\psi\left[
  \sigma_r \inv{\psi'(r)} 
\partial_r
   \left(\frac{-\psi''(r)}{\psi'(r)} \right)  + \sigma_r^2
   \frac{\psi''(r)}{\psi'(r)} \right] \\ 
&\leq  \inv{\varepsilon} \left[
\inv{\psi'(r)^2} 
\partial_r
   \left(\frac{-\psi''(r)}{\psi'(r)} \right)  + 
   \frac{\psi''(r)^2}{\psi'(r)^4} \right] .
\end{align*}
By monotonicity, we get:
\begin{multline*}
\int_{[q,+\infty )^2}  duds\, \ind_{\{u<s\}}
h^\varepsilon_{s}(\lambda+\rho(s-u)) \\
\begin{aligned}
&\leq  \int_{[q,+\infty )^2}  duds\, \ind_{\{u<s\}} 
\inv{\varepsilon}\left[
\inv{\psi'(s)^2} 
\partial_s
   \left(\frac{-\psi''(s)}{\psi'(s)} \right)  + 
   \frac{\psi''(s)^2}{\psi'(s)^4} \right] \\
&\leq  \int_{[q,+\infty )^2}  duds\, 
\inv{\varepsilon}\ind_{\{u<s\}} \left[
\inv{\psi'(u)^2} 
\partial_s
   \left(\frac{-\psi''(s)}{\psi'(s)} \right)  + 
   \frac{\psi''(u)}{\psi'(u)^2} \frac{\psi''(s)}{\psi'(s)^2}\right] \\
&= \frac{2}{\varepsilon} \int_{[q,+\infty )}  du\, 
\frac{\psi''(u)}{\psi'(u)^3}\\
&= \frac{1}{\varepsilon}\inv{\psi'(q)^2}\cdot
\end{aligned}   
\end{multline*}
We deduce that 
 the maps $u\mapsto
H^\varepsilon_{s,\lambda}(u)$ and $\lambda \mapsto H^\varepsilon_{s,\lambda}(u)$ are $\cc^1$
for $\lambda\geq 0$, $s\geq u\geq q$, with:
\[
\partial_u  H^\varepsilon_{s,\lambda}(u)=-\rho \partial_\lambda H^\varepsilon_{s,\lambda}(u)
\quad\text{and}\quad  \left|\partial_\lambda H^\varepsilon_{s,\lambda}(u) \right| \leq
h^\varepsilon_{s}(\lambda+\rho(s-u)).
\]
Thus we have $\int_{[q,+\infty )^2}  duds\, \ind_{\{u<s\}} \left|\partial_u
  H^\varepsilon_{s,\lambda}(u) \right| \leq  \rho/\varepsilon \psi'(q)^2$. 
Then, elementary computation yields:
\[
\varphi^\varepsilon_q(\lambda,\rho)= \int_q^{+\infty} H^\varepsilon_{r,\lambda}(q)\, dr
= \int_q^{+\infty} du\, \left[ H^\varepsilon_{u, \lambda}(u) - \int_u^{+\infty} ds
  \, \partial_u H^\varepsilon_{s,
    \lambda}(u) \right].
\]
We deduce that the maps $q\mapsto \varphi^\varepsilon_q(\lambda,\rho)
$ and $\lambda \mapsto \varphi^\varepsilon_q(\lambda,\rho)$ are $\cc^1$ and:
\[
\partial_q \varphi^\varepsilon_q(\lambda,\rho)
= - H^\varepsilon_{q,\lambda} (q) +  \int_q^{+\infty} ds
  \, \partial_u H^\varepsilon_{s,
    \lambda}(q) 
=- H^\varepsilon_{q,\lambda} (q)  -\rho \partial_\lambda \int_q^{+\infty} ds
  \,  H^\varepsilon_{s,
    \lambda}(q). 
\]
With $H^\varepsilon_{q,\lambda}(q)=\N^\psi\left[
  \sigma_q\,
G_q^\varepsilon(\lambda,\Theta_q)\, \,
  \exp\Bigl(-\lambda\sigma_q^{\varepsilon}
-\rho\Theta_q^{\varepsilon}-Z_q^{\varepsilon}  \Bigr)\right]$, we deduce that:
\begin{equation}
 \label{eq:partial-q}
\partial_q \varphi^\varepsilon_q(\lambda,\rho)= - \N^\psi\left[
  \sigma_q\,
G_q^\varepsilon(\lambda,\Theta_q)\, \,
  \exp\Bigl(-\lambda\sigma_q^{\varepsilon}
-\rho\Theta_q^{\varepsilon}-Z_q^{\varepsilon}  \Bigr)\right] 
  -\rho \partial_\lambda
  \varphi^\varepsilon_q(\lambda,\rho). 
\end{equation}
We also have:
\begin{equation}
\label{eq:partial-lambda}
\partial_\lambda\varphi^\varepsilon_q(\lambda,\rho)=\N^\psi\left[\sigma_q^{\varepsilon}
  \exp(-\lambda \sigma_q^{\varepsilon} - 
  \rho \Theta_q^{\varepsilon} -Z_q^{\varepsilon} )\right]. 
\end{equation}
Moreover, thanks to Girsanov formula \reff{eq:Girsanov}, we have:
\[
\varphi^\varepsilon_q(\lambda,\rho) 
=\N^\psi\left[\left(1-\exp(-\lambda\sigma_0^{\varepsilon}
        -\rho\Theta_0^\varepsilon-Z_0^\varepsilon )\right)
  \expp{-\psi(q)\sigma}\right].
\]
We deduce that:
\begin{align*}
\partial_q \varphi^\varepsilon_q(\lambda,\rho) 
&=- \psi'(q) \N^\psi\left[\sigma\left(1-\exp(-\lambda\sigma_0^{\varepsilon}
        -\rho\Theta_0^\varepsilon-Z_0^\varepsilon )\right)
  \expp{-\psi(q)\sigma}\right]\\
&=- 1+ \psi'(q) \N^\psi\left[\sigma_q
  \, \exp(-\lambda\sigma_q^{\varepsilon} 
        -\rho\Theta_q^\varepsilon-Z_q^\varepsilon )
\right]. 
\end{align*}
We deduce from \reff{eq:partial-q} and \reff{eq:partial-lambda} that:
\begin{equation}
   \label{eq:sge=1}
\N^\psi\left[ \Big(\sigma_q (\psi' (q)+
    G^\varepsilon_q(\lambda,\Theta_q)) + \rho \sigma_q^\varepsilon
  \Big) \, \exp\left(-\lambda\sigma_q^{\varepsilon} 
        -\rho\Theta_q^\varepsilon-Z_q^\varepsilon \right)
\right]
=1.
\end{equation}
Using  Girsanov formula \reff{eq:Girsanov-bN} and
\reff{eq:bN-moment}, we get:
\[
G_q^\varepsilon (\lambda, t) \leq 
G^0_q(\lambda,t)=g(\lambda+\psi(q),t)-\psi'(0)- \bN^{\psi} \left[1
  -\expp{-\psi(q)\sigma}\right] 
= g(\lambda+\psi(q),t)-\psi'(q).
\]
We deduce that:
\[
\sigma_q (\psi' (q)+
    G^\varepsilon_q(\lambda,\Theta_q)) + \rho \sigma_q^\varepsilon
\leq \sigma_q (\sup_{t\geq 0} g(\lambda+\psi(q),t) +\rho). 
\]
By dominated convergence, letting $\varepsilon$ decrease to $0$ in
\reff{eq:sge=1}, we
deduce that:
\[
\N^\psi\left[ \sigma_q \Big(g(\lambda+\psi(q),\Theta)  +\rho 
  \Big) \, \exp\left(-\lambda\sigma_q
        -\rho\Theta_q-Z_q \right)
\right]
=1.
\]
Using  Girsanov formula \reff{eq:Girsanov} once again, we get:
\[
\N^\psi\left[ \sigma\Big(g(\lambda+\psi(q),\Theta)  +\rho 
  \Big) \, \exp\left(-(\lambda+\psi(q))\sigma
        -\rho\Theta-\langle \cz^R , \Phi \rangle \right)
\right]
=1.
\]
Since $\lambda>0$ and $q>0$ are arbitrary, we deduce that \reff{eq:gq=1}
holds. 
\end{proof}

We   deduce   the   following   corollary   which   states   that the
pair $H$
and the projection of $\cz^B$ on $\T$ have the same distribution as
$\Theta$ and the projection of $\cz^R$ on $\T$.

Let $\gamma$ be a non-negative measurable function defined  on $\T$. For
a measure $\cz$ on $\R_+\times \T$, we shall abuse notation and write:
\[
\langle \cz, \gamma \rangle=\int \gamma(T)\, \cz(dt,dT).
\]

\begin{cor}\label{cor:first-eq-law}
For every non-negative measurable function $\gamma$ on $\T$ such that
$\gamma(\ct)=0$ if $m^\ct(\ct)=0$,  and every
$\lambda\geq 0$, $\rho\ge 0$, we have:
\begin{equation}
\label{eq:h=q1}
\N^\psi\left[\sigma \expp{-\lambda \sigma - \rho H - \langle \cz^B, \gamma
    \rangle}\right] 
=\N^\psi\left[\sigma\expp{-\lambda \sigma -\rho \Theta
    -\langle\cz^R,\gamma\rangle}\right]. 
\end{equation}
\end{cor}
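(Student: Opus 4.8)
The plan is to deduce Corollary~\ref{cor:first-eq-law} from Proposition~\ref{prop:laplace-sig-the-N} by a Gronwall-type argument: the key identity \reff{eq:gq=1} is, after the substitution $\Phi=\gamma$ (a function not depending on $u$), an integral/differential equation for the Laplace functional on the right-hand side of \reff{eq:h=q1}, and the same kind of equation, read off from Theorem~\ref{theo:Bismut}, characterizes the left-hand side. First I would check that the hypotheses of Proposition~\ref{prop:laplace-sig-the-N} are met when $\Phi(u,\ct)=\gamma(\ct)$: since $\gamma$ vanishes on trees of zero mass, $\langle\cz^R,\gamma\rangle=\sum_{i\in I^R}\gamma(\ct^i)$ involves only the countably many $\ct^i$ of positive mass, and one must argue this sum is $\N^\psi$-a.e.\ finite (for instance by a truncation/compensator estimate as in the proof of the proposition, or by first establishing \reff{eq:h=q1} for bounded $\gamma$ supported on $\{\sigma^\ct\ge\varepsilon\}$ and then letting $\varepsilon\to0$); one also needs $\sup_{u}g(\lambda,u)<+\infty$, which with $\Phi=\gamma$ independent of $u$ reduces to $g(\lambda,u)=\psi'(0)+\bN^\psi[1-\expp{-\lambda\sigma-\gamma(\ct)}]\le\psi'(0)+\bN^\psi[1-\expp{-\lambda\sigma}]=\psi'(q_\lambda)$ finite by \reff{eq:bN-moment}.

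Next, write $g(\lambda)$ for the constant value; more precisely set, for the purpose of this corollary, $\bar g(\lambda,\rho,\gamma):=\psi'(0)+\bN^\psi[1-\expp{-\lambda\sigma-\gamma(\ct)}]$, which does not depend on $\Theta$ because $\Phi$ does not depend on its first coordinate. Then \reff{eq:gq=1} reads
\[
\bigl(\rho+\bar g(\lambda,\rho,\gamma)\bigr)\,\N^\psi\!\left[\sigma\,\expp{-\lambda\sigma-\rho\Theta-\langle\cz^R,\gamma\rangle}\right]=1,
\]
so the right-hand side of \reff{eq:h=q1} equals $\bigl(\rho+\bar g(\lambda,\rho,\gamma)\bigr)^{-1}$ for all $\lambda>0$, $\rho\ge0$. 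On the other side, Theorem~\ref{theo:Bismut} with $\Phi(u,\ct)=\gamma(\ct)$ gives
\[
\N^\psi\!\left[\sigma\,\expp{-\lambda\sigma-\rho H-\langle\cz^B,\gamma\rangle}\right]
=\int_0^{+\infty}\!da\,\expp{-\rho a}\exp\!\left(-\int_0^a g(\lambda,u)\,du\right),
\]
and since $g(\lambda,u)=\bar g(\lambda,\rho,\gamma)$ is constant in $u$ this integral evaluates explicitly to $\int_0^\infty \expp{-(\rho+\bar g)a}\,da=\bigl(\rho+\bar g(\lambda,\rho,\gamma)\bigr)^{-1}$. Hence both sides of \reff{eq:h=q1} coincide for $\lambda>0$ and $\rho\ge0$; the case $\lambda=0$ follows by monotone convergence letting $\lambda\downarrow0$ (both sides being nondecreasing limits as $\lambda$ decreases, using $\sigma>0$ $\N^\psi$-a.s.\ and $\N^\psi[\sigma\expp{-\psi(q)\sigma}]<\infty$ to justify finiteness).

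The main obstacle I anticipate is purely a matter of integrability bookkeeping rather than a new idea: one must make sure that Proposition~\ref{prop:laplace-sig-the-N} is genuinely applicable, i.e.\ that $\langle\cz^R,\gamma\rangle<+\infty$ $\N^\psi$-a.e.\ for the $\gamma$ at hand. The cleanest route is to prove \reff{eq:h=q1} first for $\gamma$ of the form $\gamma\,\ind_{\{\sigma^\ct\ge\varepsilon\}}$ with $\gamma$ bounded—then the sum defining $\langle\cz^R,\gamma\rangle$ has only finitely many terms (Property~(i) and \reff{eq:N1-es} bound the number of grafted subtrees of mass $\ge\varepsilon$), the hypotheses hold trivially, and the two explicit evaluations above give equality—and then remove the truncation and the boundedness by monotone convergence on both sides simultaneously. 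A small secondary point is to confirm that the constant-in-$u$ specialization of $g$ is legitimate, i.e.\ that plugging $\Phi(u,\ct)=\gamma(\ct)$ into \reff{eq:def-g} is allowed; this is immediate since $\gamma$ is a non-negative measurable function on $\T$ and $(u,\ct)\mapsto\gamma(\ct)$ is then non-negative measurable on $\R_+\times\T$.
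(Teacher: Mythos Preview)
Your approach is essentially that of the paper: both specialize Proposition~\ref{prop:laplace-sig-the-N} to $\Phi(u,\ct)=\gamma(\ct)$, observe that $g(\lambda,u)$ is then constant in $u$, and compute each side of \reff{eq:h=q1} explicitly as $(\rho+g(\lambda,0))^{-1}$ via \reff{eq:gq=1} and Theorem~\ref{theo:Bismut} respectively, extending to $\lambda=0$ by monotone convergence. One small slip: your inequality $\bN^\psi[1-\expp{-\lambda\sigma-\gamma}]\le\bN^\psi[1-\expp{-\lambda\sigma}]$ points the wrong way since $\gamma\ge0$; the paper avoids this by truncating with $\gamma(\ct)\le c\,\sigma$ (so that $\langle\cz^R,\gamma\rangle\le c\,\sigma<\infty$ and $g(\lambda,0)\le\psi'(0)+\bN^\psi[1-\expp{-(\lambda+c)\sigma}]<\infty$ are immediate) rather than your $\gamma\,\ind_{\{\sigma\ge\varepsilon\}}$ with $\gamma$ bounded, though your truncation also works once this slip is discarded.
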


\begin{proof}
  Let  $\lambda>0$.   Recall  $\sigma=m^\ct(\ct)$.   First  assume  that
  $\gamma(\ct)\leq  c  \sigma$ for  some  finite  constant $c$.   Taking
  $\Phi(t,\ct)=\gamma(\ct)$  in  Theorem \ref{theo:Bismut} and  using  that
  $g(\lambda,u)$ doesn't depend on $u$, we get:
\[
\N^\psi\left[\sigma \expp{-\lambda \sigma - \rho H - \langle \cz^B, \gamma
    \rangle}\right] 
=\frac{1}{\rho+g(\lambda,0)} \cdot
\]
Notice that $\langle \cz^R, \Phi \rangle \leq  c\sigma$ and thus
hypotheses from Proposition \ref{prop:laplace-sig-the-N} are in force. 
We deduce from Proposition \ref{prop:laplace-sig-the-N} that:
\[
\N^\psi\left[\sigma\exp\left( -\lambda \sigma-\rho\Theta-\langle \cz^R, \gamma
    \rangle \right)\right]
=\frac{1}{\rho+g(\lambda,0)} \cdot
\]
Thus equality \reff{eq:h=q1} holds. Use monotone convergence to remove
hypotheses  $\lambda>0$ and $\gamma(\ct)\leq c \sigma$
for some finite constant $c$. 
\end{proof}

\subsection{Proof of Theorem \ref{theo:main}}

Let  $\Phi$ be  a measurable  non-negative function  defined on  the space
$\R_+\times  \T$. Let  us assume  that for  every  $\ct\in\T$, $t\mapsto
\Phi(t,\ct)$  is  continuous,   $\langle  \cz^R,\Phi\rangle$  is  finite
$\N^\psi$-a.s.  and that the  function $g$ defined by \reff{eq:def-g} is
bounded for any $\lambda>0$ as a function of $u$. We set:
\[
\Gamma^R (r,h)=\N^\psi\left[\expp{-\langle  \cz^R,\Phi\rangle}\bigm|
  \sigma=r,\ \Theta=h\right]. 
\]
We deduce from Proposition \ref{prop:laplace-sig-the-N} and Corollary
\ref{cor:first-eq-law} that for every $\lambda>0$, $\rho\ge 0$, we have:
\begin{align}
\nonumber
   1
&=\N^\psi\left[\sigma\bigl(\rho+g(\lambda,\Theta)\bigr)
  \expp{-\lambda\sigma-\rho\Theta-\langle\cz^R,\Phi\rangle}\right]\\
\nonumber
&=\N^\psi\left[\sigma\bigl(\rho+g(\lambda,\Theta)\bigr)
  \expp{-\lambda\sigma-\rho\Theta} \Gamma^R (\sigma,\Theta)\right]\\
\label{eq:lap}
&=\N^\psi\left[\sigma
  \Big(\rho+g(\lambda,H)\Big)\expp{-\lambda \sigma-\rho H}
  \Gamma^R(\sigma,H)\right]. 
\end{align}
Let  $ \sum_{i\in  I}\delta_{(h_i,\ct_i)}$  be  a  Poisson
measure  with  intensity $dh\,  \bN^\psi[d\ct]$  under some  probability
measure      $P$.      For     every      $i\in      I$,     we      set
$\sigma_i=m^{\ct_i}(\ct_i)$. Then for every $h>0$, we set:
\[
\sigma(h)=\sum_{i\in I}\ind_{\{h_i\le h\}}\sigma_i.
\]
Equation \reff{eq:lap} and Theorem \ref{theo:Bismut} imply that:
\[
\int_0^{+\infty}dh\, \expp{-(\rho+\psi'(0)) h}
\expp{-G(h)}(\rho+g(\lambda,h))=1,
\]
with:
\[
G(h)=-\log\left(E\left[\expp{-\lambda\sigma(h)}\Gamma^R(\sigma(h),h)\right]\right).
\]
We deduce that:
\[
\int_0^{+\infty}dh\, \expp{-\rho
  h}\left[1-\expp{-\psi'(0)h-G(h)}\right]=\int_0^{+\infty}
\inv{\rho} \expp{-\rho h} \, dA(h)
= \int_0^{+\infty}dh\,
\expp{-\rho h}A(h),
\]
with:
\[
A(h)=\int_0^hdu\, \expp{-\psi'(0)u-G(u)}g(\lambda,u).
\]
Since this holds for every $\rho\ge 0$, uniqueness of the Laplace transform implies
that:
\begin{equation}\label{eq:A}
A(h)=1-\expp{-\psi'(0)h-G(h)} \quad\text{a.e.}
\end{equation}
Since $A$ is  continuous, there exists a continuous  function $\tilde G$
such  that   a.e.  $\tilde   G=G$.  Since,  $t\mapsto   \Phi(t,\ct)$  is
continuous,   we  get   that,   for  every   $\lambda\ge  0$,   $u\mapsto
g(\lambda,u)$ is continuous. Then $A$ is of class $\mathcal{C}^1$ and so
is $\tilde G$. Moreover, by differentiating \reff{eq:A}, we get:
\[
\psi'(0)+\tilde G'(h)=g(\lambda,h).
\]
Since $A(0)=0$, we get  $\tilde G(0)=0$, and thus $
\psi'(0)h+\tilde G(h)=\int_0^hg(\lambda,u)du$. 
This implies that:
\begin{equation}
   \label{eq:Gg}
\int_0^hg(\lambda,u)du = G(h)+\psi'(0)h \quad \text{a.e.}
\end{equation}

We have:
\begin{align*}
\N^\psi\left[\sigma \expp{-\lambda \sigma-\rho H - \langle  \cz^B,\Phi\rangle}
  \right] 
& =\int_0^{+\infty}dh\,
\expp{-\rho h-\int_0^hg(\lambda,u)du}\\
& =\int_0^{+\infty}dh\,
\expp{-(\rho+\psi'(0))h-G(h)}\\
& =\int_0^{+\infty}dh\,
\expp{-(\rho+\psi'(0))h}E\left[\expp{-\lambda\sigma(h)}\Gamma^R(\sigma(h),h)\right]\\
& =\N^\psi\left[\sigma \expp{-\lambda\sigma-\rho
    H}\Gamma^R(\sigma,H)\right]\\
&=\N^\psi\left[\sigma \expp{-\lambda\sigma-\rho
\Theta }\Gamma^R(\sigma,\Theta)\right]\\
&=\N^\psi\left[\sigma \expp{-\lambda\sigma-\rho
\Theta  -\langle \cz^R, \Phi \rangle}\right],
\end{align*}
where  we  used  Theorem   \ref{theo:Bismut}  for  the  first  and  fourth
equalities, \reff{eq:Gg} for  the second, the definition of  $G$ for the
third, Corollary \ref{cor:first-eq-law}  (which states that $(\sigma,H)$
and $(\sigma,\Theta)$  have the  same distribution under  $\N^\psi$) for
the fifth, and the definition of $\Gamma^R$ for the last.

As $\N^\psi\left[\sigma\expp{-\lambda \sigma}\right]$  is finite, we can
remove   using    dominated   convergence   the    hypothesis   $\langle
\cz^R,\Phi\rangle$ finite.  The   function  $g$   defined  by
\reff{eq:def-g},      with      $\Phi(t,      \ct)$     replaced      by
$\Phi(t,\ct)\ind_{\{\sigma\leq 1/n\}}$,  is bounded for  any $\lambda>0$
as a  function of $u$. Thus,  using again dominated  convergence, we can
remove  the hypothesis  on  $\Phi$  such that  function  $g$ defined  by
\reff{eq:def-g} is  bounded for  any $\lambda>0$ as  a function  of $u$.
Then use monotone class theorem  to remove the continuity hypothesis on
$\Phi$ and end the proof.


\end{document}